\newtheorem{theorem}{Theorem}[section]
\newtheorem{corollary}[theorem]{Corollary}
\newtheorem{lemma}[theorem]{Lemma}
\newtheorem{proposition}[theorem]{Proposition}
\theoremstyle{remark}
\theoremstyle{remark}
\theoremstyle{remark}
\newtheorem{remark}[theorem]{Remark}
\newcommand{\R}{{\mathbb R}}
\newcommand{\N}{{\mathbb N}}
\newcommand{\di}{\partial}
\newcommand{\la}{\langle}
\newcommand{\ra}{\rangle}
\newcommand{\wick}[1]{{:}\omega^{\otimes #1}{:}}
\numberwithin{equation}{section}
\begin{document}

%
%
%
%
%
%
%
%
%

\title[Wick calculus for noncommutative $q$-white noise]
 {Wick calculus for noncommutative white noise corresponding to $q$-deformed commutation relations}

\author[U. C. Ji]{Un Cig Ji}
\address{Research Institute of Mathematical Finance\\
Chungbuk National University\\
Cheongju 28644\\
Republic of Korea}
\email{uncigji@chungbuk.ac.kr}

\thanks{The first author (UCJ) was supported by Basic Science Research Program
through the NRF funded by the MEST (No. NRF-2016R1D1A1B01008782).}

\author[E. Lytvynov]{Eugene Lytvynov}
\address{Department of Mathematics\\
Swansea University\\
Singleton Park\\
Swansea SA2 8PP\\
U.K.}
\email{e.lytvynov@swansea.ac.uk}

\subjclass{Primary 60H40; Secondary 46A11, 46L53}

\keywords{$q$-commutation relations, noncommutative white noise,
$q$-white noise, Wick product, Wick-power series}

\date{July 10, 2016}
\dedicatory{Dedicated to Professor Marek Bo\.zejko on the occasion of his 70th birthday}


\begin{abstract}
We derive the Wick calculus for test and generalized functionals
of noncommutative white noise corresponding to $q$-deformed commutation relations with $q\in(-1,1)$.
We construct a Gel'fand triple centered at the $q$-deformed Fock space
in which both  the test, nuclear space and its dual space are algebras
with respect to the addition and the Wick multiplication.
Furthermore, we prove a V\r{a}ge-type inequality for the Wick product on the dual space.
\end{abstract}

\maketitle

\section{Itroduction}
White noise analysis is a theory of test and generalized functionals
of the time derivative of Brownian motion---white noise.
It has found numerous applications, in particular in  mathematical physics  and  mathematical finance,
see e.g.\ \cite{BK,DOP,HKPS,HOUZ,Obata} and the references therein.
This theory is based on the Wiener--It\^o--Segal isomorphism
between the $L^2$-space with respect to a Gaussian measure and the symmetric Fock space.
Consider a linear functional $\langle \omega,\varphi\rangle$,
the dual pairing of white noise $\omega$ and a test function $\varphi$,
and then $\langle \omega,\varphi\rangle$ is a Gaussian random variable with mean $0$ and variance $|\varphi|_{L^2}^2$.
Under the Wiener--It\^o--Segal isomorphism
the operator of multiplication by $\langle \omega,\varphi\rangle$ in the $L^2$-space
becomes the operator $a^+(\varphi)+a^-(\varphi)$ in the symmetric Fock space,
which is called the quantum decomposition of the Gaussian random variable $\langle \omega,\varphi\rangle$.
Here $a^+(\varphi)$ and $a^-(\varphi)$ are creation and annihilation operators, respectively,
and satisfy the canonical commutation relations.

A crucial  technical tool in  white noise analysis is the Wick product, denoted by~$\diamond$.
Unlike the point-wise product, the Wick product
is a well-defined operation on certain spaces of generalized functionals of $\omega$,
like for example, the Hida space \cite{HKPS,Obata} or the Kondratiev space \cite{K,KLS,HOUZ}.
In the latter space, one can additionally treat an infinite Wick-power series:
\begin{equation}\label{hijg8turt}
\phi^\diamond(F)=\sum_{n=0}^\infty a_n F^{\diamond n},
\end{equation}
where $F$ is an element of the Kondratiev space
and $\phi(z)=\sum_{n=0}^\infty a_n z^n$ is an analytic function of a complex variable $z$ from a neighborhood of zero.
In particular, one shows that a distribution $F$ has an inverse with respect to  Wick product
if and only if the expectation of $F$ (in a generalized sense) is not equal to zero.
Generalized functionals $\phi^\diamond(F)$ are used in stochastic differential equations involving the Wick product, e.g.\ \cite{HOUZ,DOP}.
While the original proof of the convergence of the series given as in \eqref{hijg8turt}
was done through the analytic characterization theorem in terms of the so-called $S$-transform \cite{K,KLS},
an alternative proof of this fact is based on the V\r{a}ge inequality \cite{HOUZ,V}.

Let us also recall that, in terms of the symmetric Fock space,
the Wick product corresponds to the  symmetric tensor product.
Equivalently, in terms of the creation and annihilation operators acting on the symmetric Fock space,
the Wick product $\diamond$ means normal ordering,
i.e., moving the creation operators to the left and the annihilation operators to the right, see e.g. \cite{HKPS}.
For example, for test functions $\varphi$ and $\psi$,
\begin{align}
&(a^+(\varphi)+a^-(\varphi))\diamond(a^+(\psi)+a^-(\psi))\notag\\
&\quad=a^+(\varphi)a^+(\psi)+a^+(\varphi)a^-(\psi)+a^+(\psi)a^-(\varphi)+
a^-(\varphi)a^-(\psi).
\label{ufrt7ur}
\end{align}

In the framework of the free probability,
Alpay and Salomon \cite{AS} (see also \cite{AJS})
constructed a noncommutative analog of the Kondratiev space
and proved a V\r{a}ge-type inequality on it.
More precisely, they constructed a Gel'fand triple with the center space being the full Fock space,
defined the Wick product on it as the usual tensor product,
and proved that the tensor product on the dual space satisfies a V\r{a}ge-type inequality.

For $q\in (-1,1)$, Bo\.zejko and Speicher \cite{BS}
constructed a representation of the $q$-deformed commutation relation:
\begin{equation}\label{buygy8ugt}
 a^-(\varphi)a^+(\psi)=qa^+(\varphi)a^-(\psi)+( \varphi,\psi)_\mathcal{H}.
\end{equation}
Here $\varphi$ and $\psi$ are elements of a Hilbert space $\mathcal{H}$.
Note the absence of commutation relations between $a^+(\varphi)$ and $a^+(\psi)$,
respectively between $a^-(\varphi)$ and $a^-(\psi)$.
The operators $a^+(\varphi)$ and $a^-(\varphi)$ are realized
as the creation and annihilation operators in the $q$-deformed Fock space.
By analogy with the classical case ($q=1$),
the operators $a^+(\varphi)+a^-(\varphi)$
are called a $q$-deformed (noncommutative) Gaussian random variable \cite{BS,BKS}.

Choose $\mathcal{H}=L^2(\mathbb R_+,dx)$, and let $\chi_{[0,t]}$ denote the indicator function of the interval $[0,t]$.
Then the operators $$ B(t)=a^+(\chi_{[0,t]})+a^-(\chi_{[0,t]}),\quad t\ge0,$$
form a $q$-Brownian motion. In the special case $q=0$, we  get a free Brownian motion. In particular, this noncommutative stochastic process has freely independent increments, see e.g.\ \cite{NS}. In the case of a general $q\in(-1,1)$, the $q$-Brownian motion has $q$-independent increments according to the definition of $q$-independence in \cite{A2}, see also \cite{A1}.

Let us informally define a  $q$-white noise by
\[
\omega(t)
:=\frac{d}{dt}\Big|_{t=0}B(t)
= a^+(\delta_t)+a^-(\delta_t),
\]
where $\delta_t$ is the delta function at $t$.
Thus $\omega(t)$ is an operator-valued distribution which satisfies
\[
\langle\omega,\varphi\rangle
=\int_{\mathbb{R}_+}\omega(t)\varphi(t)dt
=a^+(\varphi)+a^-(\varphi)
\]
for any test function $\varphi$.

As shown in \cite{BS},
the normal ordering for the creation and annihilation operators on the $q$-deformed Fock space
means moving the creation operators to the left and the annihilation operators to the right
with the help of the rule $a^-(\varphi)\diamond a^+(\psi)=qa^+(\psi)a^-(\varphi)$.
For example,
\begin{align}
&(a^+(\varphi)+a^-(\varphi))\diamond(a^+(\psi)+a^-(\psi))\notag\\
&\quad=a^+(\varphi)a^+(\psi)+a^+(\varphi)a^-(\psi)+qa^+(\psi)a^-(\varphi)+
a^-(\varphi)a^-(\psi),
\label{udFt7u}
\end{align}
which, of course, becomes \eqref{ufrt7ur} as $q\to1$.
The corresponding Wick product $\diamond$ on the $q$-deformed Fock space
has again the form of the tensor product.
However, for $q\ne0$,
one has to deal with the $q$-deformed scalar product
on tensor powers of a Hilbert space.

The aim of the present paper
is to introduce a proper mathematical framework of $q$-white noise
based on which the Wick calculus for $q$-white noise is available.
For our purpose, we will construct a Gel'fand triple centered at the $q$-deformed Fock space
in which both the test, nuclear space and its dual space are algebras
with respect to the addition and the Wick multiplication,
and furthermore a V\r{a}ge-type inequality holds on the dual space.
In the limit $q\to1$,
we will recover the Kondratiev space of generalized functionals of white noise,
while for $q=0$, in a special case, we will recover the space from \cite{AS}.
We note that the V\r{a}ge-type inequalities derived in this paper
for both the classical case ($q=1$, see \cite{V}) and the free case ($q=0$, see \cite{AS})
differ from the known ones, and actually have a simpler form.

The paper is organized as follows.
In Section~\ref{tydr6},
we discuss orthogonalization of polynomials of $q$-white noise $\omega$
in the corresponding noncommutative $L^2$-space with respect to the vacuum expectation.
In Section~\ref{jigt8i7},
we introduce the Wick product and discuss how it is related to the orthogonal polynomials.
The main results of the paper are in Sections~\ref{vcytd657i} and~\ref{jkgt7u6r7}.
In Section~\ref{vcytd657i},
we construct spaces of noncommutative test functionals of $\omega$
which are algebras under the addition and the Wick multiplication.
Finally, in Section~\ref{jkgt7u6r7},
we prove that the corresponding dual spaces are also algebras under the addition and the Wick multiplication,
and furthermore a V\r{a}ge-type inequality holds on them.
Just like in the classical and free cases,
we get immediate consequences of this result,
like well-definiteness of an infinite Wick-power series given as in \eqref{hijg8turt}.

\section{Orthogonal  polynomials of $q$-white noise}\label{tydr6}

Let us first recall a representation of the $q$-commutation relations, see \cite{BS} for details.
Let $\mathcal{H}$ be a real separable Hilbert space with the scalar product $(\cdot,\cdot)_\mathcal{H}$.
Let $q\in(-1,1)$.
For each $n\in\N$, we define a bounded linear operator
$P_q^{(n)}:\mathcal{H}^{\otimes n}\to\mathcal{H}^{\otimes n}$ by
\begin{equation}\label{hvftdf7yihyu}
 P_q^{(n)} f_1\otimes\dots\otimes f_n
 :=\sum_{\pi\in S_n}q^{\operatorname{inv}(\pi)}f_{\pi(1)}\otimes\dots\otimes f_{\pi(n)}
\end{equation}
for $f_1,\dots,f_n\in \mathcal{H}$,
where $S_n$ is the symmetric group of order $n$
and, for each permutation $\pi\in S_n$,
$\operatorname{inv}(\pi)$ denotes the number of inversions in $\pi$,
i.e., the number of all pairs $(i,j)$, $1\le i<j\le n$, such that $\pi(i)>\pi(j)$.
In this paper, for convenience, we use that $0^0:=1$ when $q=0$.
Then the operator $P_q^{(n)}$ is strictly positive (see \cite{BS}).
We define $\mathcal{F}^{(n)}_q(\mathcal{H})$
as the real separable Hilbert space which coincides with $\mathcal{H}^{\otimes n}$ as a set
and has scalar product
\[
( f^{(n)},g^{(n)})_{\mathcal{F}^{(n)}_q(\mathcal{H})}
:=( P_q^{(n)} f^{(n)},g^{(n)})_{\mathcal{H}^{\otimes n}}\,.
\]
We also set $\mathcal{F}^{(0)}_q(\mathcal{H}):=\R$.
We define the $q$-Fock space
\[
\mathcal{F}_q(\mathcal{H})
:=\bigoplus_{n=0}^\infty \mathcal{F}^{(n)}_q(\mathcal{H}).
\]
As usual, we will identify any $f^{(n)}\in \mathcal{F}^{(n)}_q(\mathcal{H})$
with the corresponding element of $\mathcal{F}_q(\mathcal{H})$.

For each $\varphi\in\mathcal{H}$,
we consider bounded linear operators $a^+(\varphi)$ and $a^-(\varphi)$ in $\mathcal{F}_q(\mathcal{H})$
which are defined by
\begin{align*}
a^+(\varphi)f^{(n)}&:=\varphi\otimes f^{(n)},\\
a^-(\varphi) f_1\otimes\dots\otimes f_n
   &:=\sum_{i=1}^n q^{i-1}(\varphi,f_i)_\mathcal{H}\, f_1\otimes\dots\otimes \check f_{i}\otimes \dots\otimes f_n
\end{align*}
for $f^{(n)}\in \mathcal{F}^{(n)}_q(\mathcal{H})$ and $f_1,\dots,f_n\in \mathcal{H}$,
where $\check f_i$ denotes the absence of $f_i$.
Thus, $a^+(\varphi)$ and $a^-(\varphi)$
are called the creation and annihilation operators, respectively,
and are adjoint of each other in $\mathcal{F}_q(\mathcal{H})$.

Although our results will hold for any infinite dimensional separable Hilbert space $\mathcal{H}$,
it will be convenient to think of $\mathcal{H}$ as an $L^2$-space $L^2(X,\sigma)$.
Here $X$ is a separable topological space with a $\sigma$-finite non-atomic Borel measure $\sigma$.
For $x\in X$, we introduce, at least informally, the annihilation operator $\di_x$
and the creation operator $\di^\dag_x$ at point $x$ so that,
for each $\varphi\in\mathcal{H}$,
\begin{align}
a^-(\varphi)&=\int_X \di_x\,\varphi(x)\,d\sigma(x),\label{gftyrt5er67}\\
a^+(\varphi)&=\int_X \di^\dag_x\,\varphi(x)\,d\sigma(x)\label{fYUGUYW}.
\end{align}
Thus, for $f^{(n)}\in\mathcal{F}_q^{(n)}(\mathcal{H})$,
it holds that
\begin{align*}
(\di_x f^{(n)})(x_1,\dots,x_{n-1})&=\sum_{i=1}^n q^{i-1}f^{(n)}(x_1,\dots,x_{i-1},x,x_i,\dots,x_{n-1}),\\
\di_x^\dag f^{(n)}&=\delta_x\otimes f^{(n)},
\end{align*}
where $\delta_x$ is the delta function at $x$.
A rigorous meaning of the above formulas is given
through the corresponding quadratic forms, cf.\ e.g.\ \cite{RS2}.

We define the $q$-white noise by
\[
\omega(x)=\di^\dag_x +\di_x,\quad x\in X.
\]
Thus, for each $\varphi\in \mathcal{H}$,
we obtain a bounded self-adjoint operator in $\mathcal{F}_q(\mathcal{H})$ by setting
\[
\la \omega,\varphi\ra:=\int_X\omega(x)\varphi(x)\,d\sigma(x)
=a^+(\varphi)+a^-(\varphi).
\]

We denote by $\mathcal{H}^{\otimes_{\mathbf a}n}$
the $n$-th algebraic tensor power of $\mathcal{H}$.
That is, $\mathcal{H}^{\otimes_{\mathbf a}n}$ is the subset of $\mathcal{H}^{\otimes n}$
which is the linear span of the vectors $f_1\otimes\dots\otimes f_n$ with $f_1,\dots,f_n\in\mathcal{H}$.
Let $\mathcal{P}$ denote the real algebra generated by the operators $\la \omega,\varphi\ra$ with $\varphi\in\mathcal{H}$.
We denote
\[
\la \omega^{\otimes n},f_1\otimes\dots\otimes f_n\ra
:= \la \omega,f_1\ra\dotsm \la \omega,f_n\ra.
\]
Extending by linearity,
we define a (noncommutative) monomial $\la \omega^{\otimes n},f^{(n)}\ra$
for each $f^{(n)}\in \mathcal{H}^{\otimes_{\mathbf a}n}$.
Evidently, $\mathcal{P}$ consists of all (noncommutative) polynomials in $\omega$
which are of the form:
\begin{equation}\label{hfur8rt}
P= f^{(0)}+\sum_{i=1}^n \la \omega^{\otimes i},f^{(i)}\ra,
\quad f^{(0)}\in\R,\ f^{(i)}\in \mathcal{H}^{\otimes_{\mathbf a}i}.
\end{equation}

We define the vacuum expectation on  $\mathcal{P}$ by
\[
\mu (P):=( P\Omega,\Omega)_{\mathcal{F}_q(\mathcal{H})},\quad P\in\mathcal{P},
\]
where $\Omega:=(1,0,0,\dots)$ is the vacuum vector in $\mathcal{F}_q(\mathcal{H})$.
We define an inner product
\begin{equation}\label{hgftyr7e7r}
(P_1,P_2)_{L^2(\mu)}
:=\mu(P_2^*P_1)
=( P_1\Omega,P_2\Omega)_{\mathcal{F}_q(\mathcal{H})},
\quad P_1,P_2\in\mathcal{P}.
\end{equation}
Note that, for each $P\in\mathcal{P}$ with $P\ne0$,
we have $(P,P)_{L^2(\mu)}>0$.
Hence, we can define a noncommutative $L^2$-space $L^2(\mu)$
to be the real Hilbert space obtained as the closure of $\mathcal{P}$
with respect to the norm induced by the inner product $(\cdot,\cdot)_{L^2(\mu)}$.
In particular, $\mathcal{P}$ is a dense subset of $L^2(\mu)$.

The following proposition immediately follows from the definition of $L^2(\mu)$.

\begin{proposition}\label{gdyde6y}
For each $P\in\mathcal{P}$, define $IP:=P\Omega$.
Then, $I$ is extended by the continuity to a unitary operator
$I:L^2(\mu)\to\mathcal{F}_q(\mathcal{H})$.
Furthermore, under the action of $I$,
the operator of the left multiplication by $\la\omega,\varphi\ra$ in $L^2(\mu)$
(denoted by $L_{\la\omega,\varphi\ra}$) becomes $\la\omega,\varphi\ra$, i.e.,
\begin{equation}\label{jklhiugti8t}
I L_{\la\omega,\varphi\ra} I^{-1}=\la\omega,\varphi\ra.
\end{equation}
\end{proposition}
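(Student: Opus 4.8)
The plan is to verify the two claims separately, both of which follow almost directly from the definitions in Section~\ref{tydr6}. For the first claim, I would observe that the map $I\colon \mathcal{P}\to\mathcal{F}_q(\mathcal{H})$, $IP:=P\Omega$, is by construction isometric: indeed, by the definition \eqref{hgftyr7e7r} of the inner product on $L^2(\mu)$ we have $(P_1,P_2)_{L^2(\mu)}=(P_1\Omega,P_2\Omega)_{\mathcal{F}_q(\mathcal{H})}=(IP_1,IP_2)_{\mathcal{F}_q(\mathcal{H})}$, so $I$ preserves inner products on the dense subspace $\mathcal{P}$ and therefore extends uniquely to an isometry $L^2(\mu)\to\mathcal{F}_q(\mathcal{H})$. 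To see that this extension is unitary it remains to check that $I$ has dense range, i.e.\ that $\mathcal{P}\Omega$ is dense in $\mathcal{F}_q(\mathcal{H})$. For this I would note that $\mathcal{P}\Omega$ contains all vectors of the form $\la\omega,f_1\ra\dotsm\la\omega,f_n\ra\Omega=(a^+(f_1)+a^-(f_1))\dotsm(a^+(f_n)+a^-(f_n))\Omega$; expanding and using that $a^-(\varphi)\Omega=0$, an induction on $n$ shows that this vector equals $f_1\otimes\dots\otimes f_n$ plus a linear combination of tensors of strictly lower degree. Hence, again by induction on $n$, the linear span of $\mathcal{P}\Omega$ contains every algebraic tensor $f_1\otimes\dots\otimes f_n$, and since $\bigcup_n \mathcal{H}^{\otimes_{\mathbf a}n}$ (together with $\Omega$) spans a dense subspace of $\mathcal{F}_q(\mathcal{H})$, the range of $I$ is dense. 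Therefore $I$ is unitary.

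For the second claim, I would argue directly on the dense subspace $\mathcal{P}$, where everything is a genuine algebraic identity. For $P\in\mathcal{P}$ we have, by definition of the left-multiplication operator, $L_{\la\omega,\varphi\ra}P=\la\omega,\varphi\ra P$ (product in the algebra $\mathcal{P}$). Applying $I$ gives $I L_{\la\omega,\varphi\ra}P=(\la\omega,\varphi\ra P)\Omega=\la\omega,\varphi\ra(P\Omega)=\la\omega,\varphi\ra\, IP$, where the middle equality is just associativity of operator composition together with the fact that $\la\omega,\varphi\ra$ \emph{is} the operator $a^+(\varphi)+a^-(\varphi)$ acting on $\mathcal{F}_q(\mathcal{H})$, so that its action on $P\Omega$ coincides with the action of the corresponding element of $\mathcal{P}$ followed by evaluation at $\Omega$. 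Thus $I L_{\la\omega,\varphi\ra}=\la\omega,\varphi\ra\, I$ on $\mathcal{P}$, i.e.\ $IL_{\la\omega,\varphi\ra}I^{-1}=\la\omega,\varphi\ra$ on the dense subspace $I\mathcal{P}$. Since $\la\omega,\varphi\ra$ is a bounded operator on $\mathcal{F}_q(\mathcal{H})$ and $L_{\la\omega,\varphi\ra}$ is bounded on $L^2(\mu)$ (conjugation by the unitary $I$ shows this), the identity \eqref{jklhiugti8t} extends from the dense subspace to all of $\mathcal{F}_q(\mathcal{H})$ by continuity.

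The only point requiring genuine care---and the main obstacle, such as it is---is the density of the range of $I$, i.e.\ establishing that noncommutative polynomials applied to the vacuum generate a dense subspace of the $q$-Fock space. This is the ``lower-order triangularity'' argument sketched above: one must verify that $(a^+(f_1)+a^-(f_1))\dotsm(a^+(f_n)+a^-(f_n))\Omega$ has leading term $f_1\otimes\dots\otimes f_n\in\mathcal{F}_q^{(n)}(\mathcal{H})$ with a remainder lying in $\bigoplus_{k<n}\mathcal{F}_q^{(k)}(\mathcal{H})$, which follows because each $a^-(f_i)$ lowers the tensor degree by one and $a^+(f_i)$ raises it by one, so only the all-creation term contributes to degree $n$. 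A clean induction on $n$ then upgrades this to: the span of $\{P\Omega : P\in\mathcal{P}\}$ contains $\mathcal{F}_q^{(k)}(\mathcal{H})$ (algebraically) for every $k$, hence is dense. Everything else in the proposition is a formal consequence of the definitions of $L^2(\mu)$, $\mu$, and the operators $\la\omega,\varphi\ra$, and can be disposed of in a line or two.
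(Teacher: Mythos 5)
Your proof is correct and follows the only natural route: the paper itself states this proposition without proof ("immediately follows from the definition of $L^2(\mu)$"), and your argument simply spells out the details — isometry from \eqref{hgftyr7e7r}, density of the range via the lower-order triangularity of $(a^+(f_1)+a^-(f_1))\dotsm(a^+(f_n)+a^-(f_n))\Omega$ (the same fact the paper invokes as $I\mathcal{P}_n=\bigoplus_{k=0}^n\mathcal{H}^{\otimes_{\mathbf a}k}$ in the proof of Proposition~\ref{hjuyfuyf}), and the intertwining identity checked on the dense subspace $\mathcal{P}$ and extended by boundedness. No gaps.
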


For $k\in\N_0=\mathbb{N}\cup\{0\}$,
we denote by $\mathcal{P}_k$ the subset of $\mathcal{P}$
consisting of all polynomials of order $\le k$,
i.e., all $P\in\mathcal{P}$ given as in \eqref{hfur8rt} with $n\le k$.
Let $\mathcal{MP}_k$ denote the closure of $\mathcal{P}_k$ in $L^2(\mu)$
(measurable polynomials of order $k$).
Let
\[
\mathcal {OP}_k=\mathcal {MP}_k\ominus\mathcal {MP}_{k-1}
\]
(orthogonal polynomials of order $k$).
Here $\ominus$ denotes orthogonal difference in $L^2(\mu)$.
Since $\mathcal{P}$ is dense in $L^2(\mu)$,
we get the orthogonal decomposition:
\[
L^2(\mu)=\bigoplus_{n=0}^\infty \mathcal {OP}_n.
\]

For each $f^{(n)}\in \mathcal{H}^{\otimes_{\mathbf a}n}$, we denote by
$\la \wick{n},f^{(n)}\ra$ the orthogonal projection of the monomial $\la \omega^{\otimes n},f^{(n)}\ra$ onto $\mathcal {OP}_n$.

\begin{proposition}\label{hjuyfuyf}
For each $f^{(n)}\in \mathcal{H}^{\otimes_{\mathbf a}n}$,
we have
\[
I \la\wick{n}, f^{(n)}\ra=f^{(n)}.
\]
\end{proposition}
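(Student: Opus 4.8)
The plan is to compute $I\langle \wick n, f^{(n)}\rangle$ by decomposing the monomial $\langle\omega^{\otimes n},f^{(n)}\rangle$ according to the orthogonal grading of $L^2(\mu)$ and tracking what the unitary $I$ does to each piece. First I would recall from Proposition~\ref{gdyde6y} that $I$ intertwines left multiplication by $\langle\omega,\varphi\rangle$ with the operator $\langle\omega,\varphi\rangle=a^+(\varphi)+a^-(\varphi)$ acting on $\mathcal{F}_q(\mathcal{H})$. Consequently, for $f^{(n)}=f_1\otimes\cdots\otimes f_n$ one gets
\[
I\langle\omega^{\otimes n},f^{(n)}\rangle
=(a^+(f_1)+a^-(f_1))\cdots(a^+(f_n)+a^-(f_n))\,\Omega,
\]
and expanding the product over all $2^n$ choices of $a^\pm$ gives a sum of vectors living in $\mathcal{F}^{(k)}_q(\mathcal{H})$ for various $k\le n$. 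The term in which every factor is a creation operator is exactly $f_1\otimes\cdots\otimes f_n\in\mathcal{F}^{(n)}_q(\mathcal{H})$; every other term, having at least one annihilation operator, produces a vector of degree $\le n-2$ (each $a^-$ lowers the degree by one and each $a^+$ raises it by one, and the total number of $a^\pm$ is $n$, so the degree has the same parity as $n$ and the all-$a^+$ term is the unique one of top degree $n$). Thus
\[
I\langle\omega^{\otimes n},f^{(n)}\rangle = f^{(n)} + (\text{lower-degree terms}),
\]
where "lower-degree" means lying in $\bigoplus_{k\le n-2}\mathcal{F}^{(k)}_q(\mathcal{H})$, and by linearity this extends to all $f^{(n)}\in\mathcal{H}^{\otimes_{\mathbf a}n}$.

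Next I would translate this back through $I^{-1}$. Since $I$ is unitary from $L^2(\mu)$ onto $\mathcal{F}_q(\mathcal{H})$ and $I\langle\omega^{\otimes n},f^{(n)}\rangle=f^{(n)}+(\text{degree} \le n-2)$, the monomial $\langle\omega^{\otimes n},f^{(n)}\rangle$ itself decomposes in $L^2(\mu)$ as $I^{-1}f^{(n)}$ plus a term whose image under $I$ has degree $\le n-2$. The key observation is the identification of the subspaces: I would argue that $I$ maps $\mathcal{MP}_k$ onto $\bigoplus_{j=0}^{k}\mathcal{F}^{(j)}_q(\mathcal{H})$. One inclusion is immediate from the expansion above (any polynomial of order $\le k$ maps into $\bigoplus_{j\le k}\mathcal{F}^{(j)}_q$); for the reverse inclusion one shows by induction on $k$ that every $\mathcal{F}^{(k)}_q(\mathcal{H})$-vector of the form $f_1\otimes\cdots\otimes f_k$ is $I(\langle\omega^{\otimes k},f^{(k)}\rangle) - I(\text{lower-order polynomial})$, hence lies in $I(\mathcal{MP}_k)$, and simple tensors span a dense subspace. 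Taking orthogonal complements, $I$ maps $\mathcal{OP}_k = \mathcal{MP}_k\ominus\mathcal{MP}_{k-1}$ unitarily onto $\mathcal{F}^{(k)}_q(\mathcal{H})$.

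Finally, by definition $\langle\wick n,f^{(n)}\rangle$ is the orthogonal projection of $\langle\omega^{\otimes n},f^{(n)}\rangle$ onto $\mathcal{OP}_n$. Applying the unitary $I$ and using that $I$ carries the orthogonal projection onto $\mathcal{OP}_n$ in $L^2(\mu)$ to the orthogonal projection onto $\mathcal{F}^{(n)}_q(\mathcal{H})$ in $\mathcal{F}_q(\mathcal{H})$, we obtain that $I\langle\wick n,f^{(n)}\rangle$ equals the degree-$n$ component of $I\langle\omega^{\otimes n},f^{(n)}\rangle$, which by the first step is precisely $f^{(n)}$. This completes the argument. The main obstacle I anticipate is justifying rigorously that $I(\mathcal{MP}_k)=\bigoplus_{j\le k}\mathcal{F}^{(j)}_q(\mathcal{H})$ as a closed subspace — in particular the reverse inclusion, which requires the inductive step that lower-degree "correction" vectors are themselves reachable, together with a density/closedness check so that the equality of the closures (not merely the algebraic images) holds; once the subspaces are correctly matched, the projection bookkeeping is routine.
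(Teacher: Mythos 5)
Your proposal is correct and follows essentially the same route as the paper: identify $I(\mathcal{MP}_k)$ with $\bigoplus_{j\le k}\mathcal{F}^{(j)}_q(\mathcal{H})$ (hence $I(\mathcal{OP}_n)=\mathcal{F}^{(n)}_q(\mathcal{H})$), then observe that the degree-$n$ component of $I\la\omega^{\otimes n},f^{(n)}\ra=\la\omega^{\otimes n},f^{(n)}\ra\Omega$ is exactly $f^{(n)}$. You in fact spell out more carefully the reverse inclusion and the parity argument that the paper dismisses as ``easily checked,'' which is a welcome addition rather than a deviation.
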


\begin{proof}
It can be easily checked that, for each $n$,
\[
I\mathcal{P}_n=\bigoplus_{k=0}^n \mathcal{H}^{\otimes_{\mathbf a}k}.
\]
Therefore,
\[
I\,\mathcal {MP}_n=\bigoplus_{k=0}^n \mathcal{F}^{(k)}_q(\mathcal{H}),
\]
and so
\[
 I\,\mathcal {OP}_n=\mathcal{F}^{(n)}_q(\mathcal{H}).
\]
Hence, $I \la \wick{n},f^{(n)}\ra$ is the orthogonal projection in $\mathcal{F}_q(\mathcal{H})$ of
\[
I \la \omega^{\otimes n},f^{(n)}\ra=\la \omega^{\otimes n},f^{(n)}\ra\Omega
\]
onto $\mathcal{F}^{(n)}_q(\mathcal{H})$.
But the latter vector is equal to $f^{(n)}$.
\end{proof}

\begin{corollary}\label{jgyiyu}
The following recursive formula holds:
\begin{align}
\la {:}\omega{:},f\ra
&=\la \omega,f\ra,\quad f\in\mathcal{H},\label{vgfytdfyd}\\
\la \wick{n},f_1\otimes\dots\otimes f_n\ra
&=\la \omega,f_1\ra\la \wick{(n-1)},f_2\otimes\dotsm\otimes f_n\ra\notag\\
&\qquad-\la \wick{(n-2)},a^-(f_1)f_2\otimes\dots\otimes f_n\ra\label{khgiytgit}
\end{align}
for $ n\ge2$ and $f_1,\dots,f_n\in\mathcal{H}$.
In particular, for each $f^{(n)}\in \mathcal{H}^{\otimes_{\mathbf a}n}$,
we have $\la \wick{n},f^{(n)}\ra\in\mathcal{P}$.
\end{corollary}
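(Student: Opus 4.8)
The plan is to prove both identities by transporting them to the $q$-Fock space via the unitary $I$ of Proposition~\ref{gdyde6y}, making repeated use of two facts: $I\la\wick{k},g^{(k)}\ra=g^{(k)}$ for $g^{(k)}\in\mathcal{H}^{\otimes_{\mathbf a}k}$ (Proposition~\ref{hjuyfuyf}), and $I(\la\omega,\varphi\ra X)=\bigl(a^+(\varphi)+a^-(\varphi)\bigr)IX$ for $\varphi\in\mathcal{H}$ and $X\in\mathcal{P}$, which is just a restatement of \eqref{jklhiugti8t}. These computations are carried out inside a single induction on $n$ which at the same time yields the final assertion $\la\wick{n},f^{(n)}\ra\in\mathcal{P}$.

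For the base cases, $\la\wick{0},c\ra=c\in\mathcal{P}$ trivially, and for $n=1$ Proposition~\ref{hjuyfuyf} gives $I\la{:}\omega{:},f\ra=f$, while $I\la\omega,f\ra=\la\omega,f\ra\Omega=a^+(f)\Omega+a^-(f)\Omega=f$; since $I$ is injective this proves \eqref{vgfytdfyd} and, in particular, $\la{:}\omega{:},f\ra\in\mathcal{P}$. For the inductive step, fix $n\ge2$ and assume $\la\wick{k},g^{(k)}\ra\in\mathcal{P}$ for all $k<n$ and $g^{(k)}\in\mathcal{H}^{\otimes_{\mathbf a}k}$. Then $\la\wick{(n-1)},f_2\otimes\cdots\otimes f_n\ra\in\mathcal{P}$, and since $a^-(f_1)f_2\otimes\cdots\otimes f_n\in\mathcal{H}^{\otimes_{\mathbf a}(n-2)}$ also $\la\wick{(n-2)},a^-(f_1)f_2\otimes\cdots\otimes f_n\ra\in\mathcal{P}$; as $\mathcal{P}$ is an algebra containing $\la\omega,f_1\ra$, the element
\[
P:=\la\omega,f_1\ra\,\la\wick{(n-1)},f_2\otimes\cdots\otimes f_n\ra-\la\wick{(n-2)},a^-(f_1)f_2\otimes\cdots\otimes f_n\ra
\]
lies in $\mathcal{P}$. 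Applying $I$ and using the two facts above,
\[
IP=\bigl(a^+(f_1)+a^-(f_1)\bigr)(f_2\otimes\cdots\otimes f_n)-a^-(f_1)f_2\otimes\cdots\otimes f_n=f_1\otimes f_2\otimes\cdots\otimes f_n,
\]
since $a^+(f_1)(f_2\otimes\cdots\otimes f_n)=f_1\otimes f_2\otimes\cdots\otimes f_n$ in $\mathcal{F}_q(\mathcal{H})$. By Proposition~\ref{hjuyfuyf} the right-hand side equals $I\la\wick{n},f_1\otimes\cdots\otimes f_n\ra$, so injectivity of $I$ gives $\la\wick{n},f_1\otimes\cdots\otimes f_n\ra=P$, which is exactly \eqref{khgiytgit}; moreover it shows $\la\wick{n},f_1\otimes\cdots\otimes f_n\ra\in\mathcal{P}$, and since the monomial $\la\wick{n},\cdot\ra$ is linear in its argument and $\mathcal{H}^{\otimes_{\mathbf a}n}$ is spanned by simple tensors, we conclude $\la\wick{n},f^{(n)}\ra\in\mathcal{P}$ for every $f^{(n)}\in\mathcal{H}^{\otimes_{\mathbf a}n}$, closing the induction.

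There is no real obstacle in this argument; the only point needing attention is to make sure the product $\la\omega,f_1\ra\,\la\wick{(n-1)},f_2\otimes\cdots\otimes f_n\ra$ is a legitimate element of $\mathcal{P}$ (so that $I$ may be applied to it via \eqref{jklhiugti8t}) before membership in $\mathcal{P}$ has been established in general, which is precisely why the hypothesis $\la\wick{(n-1)},\cdot\ra\in\mathcal{P}$ is carried through the induction; alternatively one could note that $L_{\la\omega,f_1\ra}$ is bounded on $L^2(\mu)$, being by \eqref{jklhiugti8t} unitarily equivalent to the bounded operator $a^+(f_1)+a^-(f_1)$. Once this is in place, Propositions~\ref{gdyde6y} and~\ref{hjuyfuyf} reduce the whole statement to the trivial computation above in the $q$-Fock space.
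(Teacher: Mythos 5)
Your proof is correct and follows essentially the same route as the paper's: transport everything to the $q$-Fock space via $I$ using Propositions~\ref{gdyde6y} and~\ref{hjuyfuyf}, split $\la\omega,f_1\ra\,f_2\otimes\dotsm\otimes f_n$ into its creation and annihilation parts, and invoke injectivity of $I$. The only difference is that you organize the argument as an explicit induction carrying the membership $\la\wick{k},\cdot\ra\in\mathcal{P}$ along, which the paper leaves implicit; this is a harmless (indeed slightly more careful) packaging of the same idea.
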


\begin{proof}
By Proposition~\ref{hjuyfuyf}, for $f\in\mathcal{H}$
\[
I\la {:}\omega{:},f\ra= f=\la \omega,f\ra\Omega=I\la \omega,f\ra,
\]
so \eqref{vgfytdfyd} holds.
Next, for any $f_1,\dots,f_n\in\mathcal{H}$ ($n\ge2$),
we get, by Propositions~\ref{gdyde6y} and \ref{hjuyfuyf},
\begin{align*}
I\big(\la \omega,f_1\ra\la \wick{(n-1)},f_2\otimes\dotsm\otimes f_n\ra\big)
&=\la \omega,f_1\ra   f_2\otimes\dotsm\otimes f_n\\
&=\left(a^+(f_1)+a^-(f_1)\right)f_2\otimes\dotsm\otimes f_n\\
&=I \la \wick{n},f_1\otimes\dots\otimes f_n\ra\\
&\quad+\la \wick{(n-2)},a^-(f_1)f_2\otimes\dots\otimes f_n\ra,
\end{align*}
from which \eqref{khgiytgit} follows.
\end{proof}

\begin{corollary}\label{hgigtitg8i}
The set $\mathcal{P}$ consists of all (noncommutative) polynomials of the form:
\begin{equation}\label{guftyrf7yr}
P= f^{(0)}+\sum_{i=1}^n \la\wick{i}, f^{(i)}\ra,\quad f^{(0)}\in\R,\ f^{(i)}\in \mathcal{H}^{\otimes_{\mathbf a}i}.
\end{equation}
\end{corollary}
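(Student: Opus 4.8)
The plan is to show that the two descriptions of $\mathcal P$ --- as polynomials in the monomials $\la\omega^{\otimes i},f^{(i)}\ra$ (formula \eqref{hfur8rt}) and as polynomials in the Wick monomials $\la\wick i,f^{(i)}\ra$ (formula \eqref{guftyrf7yr}) --- span the same set. Denote by $\mathcal Q$ the set of all $P$ of the form \eqref{guftyrf7yr}. The inclusion $\mathcal Q\subseteq\mathcal P$ is immediate from the last sentence of Corollary~\ref{jgyiyu}: each $\la\wick i,f^{(i)}\ra$ lies in $\mathcal P$, hence so does any finite linear combination of such terms together with a scalar.

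For the reverse inclusion $\mathcal P\subseteq\mathcal Q$, the natural approach is induction on the order $n$ of the polynomial $P$ written as in \eqref{hfur8rt}. The base case $n=0$ is trivial. For the inductive step, it suffices to show that each monomial $\la\omega^{\otimes n},f^{(n)}\ra$ with $f^{(n)}\in\mathcal H^{\otimes_{\mathbf a}n}$ belongs to $\mathcal Q$, since $P$ is a sum of such monomials of orders $\le n$ plus a scalar. By linearity we may take $f^{(n)}=f_1\otimes\dots\otimes f_n$. Now apply the image map $I$: by Proposition~\ref{hjuyfuyf} and the computation in the proof of Corollary~\ref{jgyiyu},
\[
I\la\omega^{\otimes n},f_1\otimes\dots\otimes f_n\ra
=\la\omega,f_1\ra\,f_2\otimes\dots\otimes f_n
=I\la\wick n,f_1\otimes\dots\otimes f_n\ra+\la\wick{(n-2)},a^-(f_1)f_2\otimes\dots\otimes f_n\ra,
\]
so that, since $I$ is injective,
\[
\la\omega^{\otimes n},f_1\otimes\dots\otimes f_n\ra
=\la\wick n,f_1\otimes\dots\otimes f_n\ra+\la\wick{(n-2)},a^-(f_1)\,f_2\otimes\dots\otimes f_n\ra .
\]
(For $n=1$ this reads $\la\omega,f\ra=\la{:}\omega{:},f\ra$ by \eqref{vgfytdfyd}, and for $n=2$ the correction term is the scalar $\la\wick 0, (f_1,f_2)_{\mathcal H}\ra=(f_1,f_2)_{\mathcal H}$.) The first term on the right is in $\mathcal Q$ by definition, and $a^-(f_1)f_2\otimes\dots\otimes f_n\in\mathcal H^{\otimes_{\mathbf a}(n-2)}$, so the second term is a Wick monomial of order $n-2<n$, which lies in $\mathcal Q$ by the induction hypothesis. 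Hence $\la\omega^{\otimes n},f^{(n)}\ra\in\mathcal Q$, completing the induction.

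There is no real obstacle here: the content is entirely contained in the recursion already established in Corollary~\ref{jgyiyu}, and the corollary is just the bookkeeping observation that this recursion, read the other way, lets one rewrite ordinary monomials in terms of Wick monomials of the same and strictly lower orders (a triangular change of basis). The only point requiring a modicum of care is keeping the degree bookkeeping honest --- confirming that the correction term genuinely has order $n-2$ and not $n$ --- and handling the small cases $n=1,2$ separately, which is why it is cleanest to phrase the induction on the order of $P$ rather than manipulating infinite sums directly.
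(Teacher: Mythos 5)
Your overall strategy (induction on the order, exploiting the triangularity of the change of basis between ordinary and Wick monomials) is the same as the paper's, but the explicit identity on which your inductive step rests is false for $n\ge3$. You claim
\[
I\la\omega^{\otimes n},f_1\otimes\dots\otimes f_n\ra=\la\omega,f_1\ra\,f_2\otimes\dots\otimes f_n,
\]
citing the computation in the proof of Corollary~\ref{jgyiyu}. But that computation evaluates $I$ on the product $\la\omega,f_1\ra\la\wick{(n-1)},f_2\otimes\dots\otimes f_n\ra$, whose second factor is the \emph{Wick} monomial, with $I\la\wick{(n-1)},f_2\otimes\dots\otimes f_n\ra=f_2\otimes\dots\otimes f_n$; in your monomial the second factor is $\la\omega,f_2\ra\dotsm\la\omega,f_n\ra$, and $\la\omega,f_2\ra\dotsm\la\omega,f_n\ra\Omega\ne f_2\otimes\dots\otimes f_n$ once $n-1\ge2$. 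Concretely, for $n=3$,
\[
\la\omega,f_1\ra\la\omega,f_2\ra\la\omega,f_3\ra\Omega
=f_1\otimes f_2\otimes f_3+(f_1,f_2)_{\mathcal H}f_3+q(f_1,f_3)_{\mathcal H}f_2+(f_2,f_3)_{\mathcal H}f_1,
\]
whereas the right-hand side of your claimed decomposition yields only $f_1\otimes f_2\otimes f_3+(f_1,f_2)_{\mathcal H}f_3+q(f_1,f_3)_{\mathcal H}f_2$; the term $(f_2,f_3)_{\mathcal H}f_1$ is missing. So $\la\omega^{\otimes n},f_1\otimes\dots\otimes f_n\ra-\la\wick{n},f_1\otimes\dots\otimes f_n\ra$ is \emph{not} the single Wick monomial $\la\wick{(n-2)},a^-(f_1)f_2\otimes\dots\otimes f_n\ra$.

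The conclusion is still within reach by your method, and the repair is exactly what the paper does: one does not need a closed formula for the difference, only that it is a polynomial of order $\le n-1$, which can then be absorbed by a second appeal to the induction hypothesis. Write $\la\omega^{\otimes n},f_1\otimes\dots\otimes f_n\ra=\la\omega,f_1\ra\la\omega^{\otimes(n-1)},f_2\otimes\dots\otimes f_n\ra$, use the inductive hypothesis to replace $\la\omega^{\otimes(n-1)},f_2\otimes\dots\otimes f_n\ra$ by $\la\wick{(n-1)},f_2\otimes\dots\otimes f_n\ra$ plus a polynomial of order $\le n-2$; the recursion \eqref{khgiytgit} turns the leading term into $\la\wick{n},f_1\otimes\dots\otimes f_n\ra$ plus a Wick monomial of order $n-2$, while multiplying the remainder by $\la\omega,f_1\ra$ keeps its order $\le n-1$. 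Applying the inductive hypothesis once more to that order-$(n-1)$ remainder completes the step; with this correction your argument coincides with the paper's proof.
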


\begin{proof}
By Corollary \ref{jgyiyu}, each polynomial of the form \eqref{guftyrf7yr} belongs to $\mathcal{P}$.
So we only need to prove that, for any $n\in\N$, $f_1,\dots,f_n\in\mathcal{H}$,
the monomial $\la \omega^{\otimes n},f_1\otimes\dots\otimes f_n\ra $
can be represented in the form \eqref{guftyrf7yr}.
But this can be easily shown by induction.
Indeed, for $n=1$, this follows from \eqref{vgfytdfyd}.
Assume that the statement is true for $1,2,\dots,n$.
For any $f_1,\dots,f_{n+1}\in\mathcal{H}$, by Corollary~\ref{jgyiyu},
\[
\la \omega^{\otimes(n+1)},f_1\otimes\dots\otimes f_{n+1}\ra
  -\la \wick{(n+1)},f_1\otimes\dots\otimes f_{n+1}\ra
\]
is a polynomial from $\mathcal{P}$ of order $\le n$.
Hence, the statement follows by the induction assumption.
\end{proof}

\section{Wick product}\label{jigt8i7}

We will now introduce the Wick product.
Let $\mathcal{W}$ be the linear span of the bounded linear operators
in $\mathcal{F}_q(\mathcal{H})$ of the form:
\begin{equation}\label{ugfuygf}
a^+(f_1)\dotsm a^+(f_n)a^-(g_1)\dotsm a^-(g_m)
\end{equation}
for $n,m\in \N_0$, $f_1,\dots,f_n,g_1,\dots,g_m\in\mathcal{H}$, and the identity operator.
Note that the operator in \eqref{ugfuygf} is in the normal ordered form,
i.e., all creation operators are to the left of all annihilation operators.
We define the Wick product $\diamond$ on $\mathcal{W}$ by setting
\begin{align}
& \big(a^+(f_1)\dotsm a^+(f_n)a^-(g_1)\dotsm a^-(g_m)\big)\diamond
\big(a^+(\varphi_1)\dotsm a^+(\varphi_k)a^-(\psi_1)\dotsm a^-(\psi_l)\big)\notag\\
&\qquad\qquad :=q^{km}\,a^+(f_1)\dotsm a^+(f_n)a^+(\varphi_1)\dotsm a^+(\varphi_k)\notag\\
&\qquad\qquad\qquad\qquad\qquad \times a^-(g_1)\dotsm a^-(g_m)a^-(\psi_1)\dotsm a^-(\psi_l),
\label{hiygu8ytf}
\end{align}
and by extending this operation by the linearity.
Thus, the Wick product is nothing but
bringing all terms in the usual product,
with the help of the rule $a^-(\varphi)\diamond a^+(\psi)=qa^+(\psi)a^-(\varphi)$
to the normal ordered form.
Clearly, $\mathcal{W}$ is a real algebra with operations of the addition and the Wick multiplication.

It will also be useful to introduce the Wick product for normally ordered products
of creation and annihilation operators at point.
More precisely, we set
\begin{align}
&\big(\di_{x_1}^\dag\dotsm  \di_{x_n}^\dag \di_{s_1}\dotsm\di_{s_m}\big)\diamond\big(\di_{y_1}^\dag\dotsm  \di_{y_k}^\dag \di_{t_1}\dotsm\di_{t_l}\big)\notag\\
&\quad\qquad :=q^{km}\, \di_{x_1}^\dag\dotsm  \di_{x_n}^\dag \di_{y_1}^\dag\dotsm  \di_{y_k}^\dag \di_{s_1}\dotsm\di_{s_m}\di_{t_1}\dotsm\di_{t_l}
\label{hgigti9}
\end{align}
(see \cite{JK} and the references therein),
and extend this definition by linearity.
Thus, the formula given  in \eqref{hiygu8ytf}
is the smeared version of the formula given  in \eqref{hgigti9}.

The following proposition follows from \cite{BKS}
(see, in particular, the last paragraph on p.~137).
Its meaning is that the orthogonalization of polynomials in $L^2(\mu)$
is equivalent to taking the Wick product.

\begin{proposition}\label{hguyfgu}
We have
\[
\wick{n}(x_1,\dots,x_n)= \omega(x_1)\diamond\dots\diamond\omega(x_n),
\]
or in the smeared form
\begin{equation}\label{jigity9i}
\la \wick{n},f^{(n)}\ra
=\int_{X^n} \omega(x_1)\diamond\dots\diamond\omega(x_n)\,f^{(n)}(x_1,\dots,x_n)\,d\sigma(x_1)\dotsm d\sigma(x_n)
\end{equation}
for each $f^{(n)}\in\mathcal{H}^{\otimes_{\mathbf a}n}$.
In particular, for any $f_1,\dots,f_n\in\mathcal{H}$,
\begin{equation}\label{hjguyt8u}
\la \wick{n},f_1\otimes\dots\otimes f_n\ra
=\la \omega,f_1\ra\diamond\dots\diamond \la \omega,f_n\ra.
\end{equation}
\end{proposition}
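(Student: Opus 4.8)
The statement to prove is Proposition~\ref{hguyfgu}, which identifies the orthogonal (Wick) polynomial $\wick{n}$ with the $n$-fold Wick product of $q$-white noise. I would prove the smeared identity \eqref{hjguyt8u}, namely
\[
\la \wick{n},f_1\otimes\dots\otimes f_n\ra
=\la \omega,f_1\ra\diamond\dots\diamond \la \omega,f_n\ra
\]
for all $f_1,\dots,f_n\in\mathcal{H}$, and then pass to \eqref{jigity9i} by linearity and to the unsmeared form \eqref{jigity9i}–``at point'' by the standard quadratic-form interpretation already set up in Section~\ref{tydr6}. Since both sides of \eqref{hjguyt8u} are elements of $\mathcal P$, and $I\colon L^2(\mu)\to\mathcal F_q(\mathcal H)$ is unitary by Proposition~\ref{gdyde6y}, it suffices to show that the two sides have the same image under $I$. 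By Proposition~\ref{hjuyfuyf}, $I\la\wick{n},f_1\otimes\dots\otimes f_n\ra=f_1\otimes\dots\otimes f_n$, so the whole claim reduces to the operator identity
\[
\big(\la\omega,f_1\ra\diamond\dots\diamond\la\omega,f_n\ra\big)\Omega = f_1\otimes\dots\otimes f_n
\]
in $\mathcal F_q(\mathcal H)$.

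\textbf{Carrying out the reduction.} Since each $\la\omega,f_j\ra=a^+(f_j)+a^-(f_j)$, expanding the $n$-fold Wick product by the distributive law \eqref{hiygu8ytf} gives a sum of $2^n$ normally ordered monomials
\[
q^{c(\epsilon)}\,a^+(f_{j_1})\dotsm a^+(f_{j_p})\,a^-(f_{i_1})\dotsm a^-(f_{i_m}),
\]
one for each choice $\epsilon$ of ``$+$'' or ``$-$'' at each slot, where $j_1<\dots<j_p$ are the ``$+$''-slots in increasing order, $i_1<\dots<i_m$ the ``$-$''-slots in increasing order, and $c(\epsilon)=\sum k\,m$ is the power of $q$ accumulated from the reordering rule $a^-\diamond a^+=q\,a^+a^-$. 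Applying such a monomial to $\Omega$: the annihilation operators act first; but $a^-(g)\Omega=0$, so every term with at least one annihilation factor kills the vacuum, and the only surviving term is the all-``$+$'' one, $a^+(f_1)\dotsm a^+(f_n)\Omega=f_1\otimes\dots\otimes f_n$, with $q$-exponent zero. This gives exactly the desired identity, hence \eqref{hjguyt8u}. The general smeared form \eqref{jigity9i} follows by writing $f^{(n)}$ as a (finite) linear combination of simple tensors and using linearity of all operations involved; the ``at point'' version $\wick{n}(x_1,\dots,x_n)=\omega(x_1)\diamond\dots\diamond\omega(x_n)$ is then just the kernel reading of \eqref{jigity9i}, consistent with the pointwise Wick product \eqref{hgigti9} and the quadratic-form meaning of $\di_x,\di_x^\dag$.

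\textbf{The main obstacle.} The computation above shows that, rather surprisingly, the annihilation part of the $q$-deformation plays no role \emph{on the vacuum}: the potentially delicate bookkeeping of the powers $q^{c(\epsilon)}$ in \eqref{hiygu8ytf} is irrelevant because only the zero-annihilation term survives. So the real content is not a hard estimate but the conceptual identification: one must be confident that the Wick product $\diamond$ on $\mathcal W$ defined in \eqref{hiygu8ytf} is associative and well defined (so that the $n$-fold product is unambiguous), and that $\la\omega,f\ra$, as a polynomial, corresponds under $I$ to the operator $\la\omega,f\ra$ on the Fock space in the precise sense of \eqref{jklhiugti8t}. Granting these structural facts—the first is the remark ``$\mathcal W$ is a real algebra'' following \eqref{hiygu8ytf}, the second is Proposition~\ref{gdyde6y}—the argument is essentially the two displays above. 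An alternative, perhaps cleaner, route avoiding any direct appeal to $I$ is to verify that the right-hand side of \eqref{khgiytgit} is reproduced by the $n$-fold Wick product: using $a^-(f_1)\diamond(\dots)$ versus $\la\omega,f_1\ra(\dots)$ one checks that $\la\omega,f_1\ra\diamond\dots\diamond\la\omega,f_n\ra$ satisfies exactly the recursion of Corollary~\ref{jgyiyu}, and then invoke uniqueness of the solution of that recursion. I would present the $I$-based proof as the main one, since it is shortest and makes transparent why the deformation parameter $q$ disappears in this particular identity, and mention the recursive check as a consistency remark.
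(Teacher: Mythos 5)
Your main argument has a genuine gap at the reduction step. You assert that, since both sides of \eqref{hjguyt8u} lie in $\mathcal{P}$ and $I$ is injective there, it suffices to check that the two sides agree on the vacuum. But at this stage the right-hand side $\la\omega,f_1\ra\diamond\dots\diamond\la\omega,f_n\ra$ is only known to be an element of $\mathcal{W}$ (the fact that it lies in $\mathcal{P}$ is part of Corollary~\ref{oii-0pu}, whose proof \emph{uses} the present proposition), and the map $W\mapsto W\Omega$ is far from injective on $\mathcal{W}$: every normally ordered monomial containing at least one annihilation factor kills $\Omega$. Your own computation makes the loss visible --- you discard all $2^n-1$ terms carrying an annihilator, which are exactly the terms in which the asserted operator identity has its content. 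A concrete symptom: your argument never uses the factor $q^{km}$ in \eqref{hiygu8ytf} and would go through verbatim if $q^{km}$ were replaced by, say, $q^{2km}$; yet already for $n=2$ the resulting operator $a^+(f_1)a^+(f_2)+a^+(f_1)a^-(f_2)+q^2a^+(f_2)a^-(f_1)+a^-(f_1)a^-(f_2)$ is not equal to $\la\wick{2},f_1\otimes f_2\ra=\la\omega,f_1\ra\la\omega,f_2\ra-(f_1,f_2)_{\mathcal H}$, whose normal-ordered form \eqref{udFt7u} carries $q^1$. So checking the action on $\Omega$ proves strictly less than what is claimed and what is needed later (e.g.\ to conclude $\mathcal{P}\subset\mathcal{W}$ and to derive \eqref{uit87t8}).

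The route you relegate to a ``consistency remark'' is in fact the proof: show by induction that $W_n(f_1,\dots,f_n):=\la\omega,f_1\ra\diamond\dots\diamond\la\omega,f_n\ra$ satisfies the recursion \eqref{khgiytgit} of Corollary~\ref{jgyiyu}, which uniquely determines $\la\wick{n},\cdot\,\ra$. Writing $\la\omega,f_1\ra W_{n-1}=a^+(f_1)W_{n-1}+a^-(f_1)W_{n-1}$, the first summand is already $a^+(f_1)\diamond W_{n-1}$, while for the second one must push $a^-(f_1)$ through each block of creation operators using \eqref{buygy8ugt}:
\[
a^-(f)\,a^+(\varphi_1)\dotsm a^+(\varphi_k)=q^{k}a^+(\varphi_1)\dotsm a^+(\varphi_k)\,a^-(f)+\sum_{i=1}^{k}q^{i-1}(f,\varphi_i)_{\mathcal H}\,a^+(\varphi_1)\dotsm a^+(\varphi_{i-1})a^+(\varphi_{i+1})\dotsm a^+(\varphi_k).
\]
The leading term reproduces $a^-(f_1)\diamond W_{n-1}$ and the contraction terms must be shown to assemble into $\la\wick{(n-2)},a^-(f_1)f_2\otimes\dots\otimes f_n\ra$; matching the powers of $q$ here is precisely the bookkeeping your vacuum computation bypasses, and it is not carried out in your write-up. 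For comparison, the paper does not prove this proposition at all: it quotes it from \cite{BKS}, so the only self-contained argument on offer is yours, and its main line does not close.
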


We denote by $\mathcal{G}_{\mathrm{fin}}^{\mathrm a}(\mathcal{H})$
the linear subspace of $\mathcal{F}_q(\mathcal{H})$
which consists of all finite vectors $(f^{(0)},f^{(1)},\dots,f^{(n)},0,0,\dots)$,
where $f^{(0)}\in\R$, $f^{(i)}\in\mathcal{H}^{\otimes_{\mathrm a}i}$ for $i=1,\dots,n$, $n\in\N$.
As easily seen, $\mathcal{G}_{\mathrm{fin}}^{\mathrm a}(\mathcal{H})$
is an algebra for the addition and the tensor multiplication $\otimes$
with neutral element $\Omega$.

\begin{corollary}\label{oii-0pu}
We have $\mathcal{P}\subset\mathcal{W}$
and for any $P_1,P_2\in\mathcal{P}$,
we get $P_1\diamond P_2\in\mathcal{P}$.
Thus, $\mathcal{P}$ is an algebra for the addition and the Wick multiplication $\diamond$ with neutral element $1$.
Furthermore,
\begin{equation}\label{gytr78or}
I\mathcal{P}
=\mathcal{G}_{\mathrm{fin}}^{\mathrm a}(\mathcal{H})
\end{equation}
and for any $P_1,P_2\in\mathcal{P}$,
\begin{equation}\label{tyed65}
I(P_1\diamond P_2)=IP_1\otimes IP_2.
\end{equation}
In particular, for any $f^{(n)}\in \mathcal{H}^{\otimes_{\mathbf a}n}$
and $g^{(m)} \in \mathcal{H}^{\otimes_{\mathbf a}m}$,
\begin{equation}\label{uit87t8}
\la \wick{n},f^{(n)}\ra \diamond \la \wick m,g^{(m)}\ra=\la \wick{(n+m)},f^{(n)}\otimes g^{(m)}\ra.
\end{equation}
\end{corollary}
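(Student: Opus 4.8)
The plan is to deduce every assertion from the single identity \eqref{uit87t8}, which in turn I would obtain from Proposition~\ref{hguyfgu} together with the (already noted) fact that $\mathcal{W}$ is an algebra, in particular that $\diamond$ is associative. Before anything else I would check that $P_1\diamond P_2$ even makes sense, i.e.\ that $\mathcal{P}\subset\mathcal{W}$: by \eqref{hjguyt8u} each generating monomial $\la\wick n,f_1\otimes\dots\otimes f_n\ra$ equals $\la\omega,f_1\ra\diamond\dots\diamond\la\omega,f_n\ra$, and since each factor $\la\omega,f\ra=a^+(f)+a^-(f)$ lies in $\mathcal{W}$ and $\mathcal{W}$ is closed under $\diamond$, so does this monomial; extending by linearity and invoking Corollary~\ref{hgigtitg8i} then gives $\mathcal{P}\subset\mathcal{W}$. (If one wants it in situ, associativity of $\diamond$ on $\mathcal{W}$ can be re-derived in one line from \eqref{hiygu8ytf}: a normally ordered monomial with $m$ annihilators picks up only the scalar $q^{km}$ when multiplied on the right by one with $k$ creators, so both bracketings of a triple product collect the same total exponent while the creator and the annihilator strings are concatenated in the same order; the identity operator is plainly two-sided neutral.)

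Next I would establish \eqref{uit87t8}. By bilinearity of $\diamond$ and of the map $(f^{(n)},g^{(m)})\mapsto f^{(n)}\otimes g^{(m)}$ it suffices to treat elementary tensors $f^{(n)}=f_1\otimes\dots\otimes f_n$ and $g^{(m)}=g_1\otimes\dots\otimes g_m$. Applying \eqref{hjguyt8u} to each factor, then associativity of $\diamond$, then \eqref{hjguyt8u} once more, I get
\[
\la\wick n,f^{(n)}\ra\diamond\la\wick m,g^{(m)}\ra
=\la\omega,f_1\ra\diamond\dots\diamond\la\omega,f_n\ra\diamond\la\omega,g_1\ra\diamond\dots\diamond\la\omega,g_m\ra
=\la\wick{(n+m)},f^{(n)}\otimes g^{(m)}\ra,
\]
which is \eqref{uit87t8}.

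The remaining statements are then formal. Writing $P_1,P_2\in\mathcal{P}$ in the form \eqref{guftyrf7yr} and handling the scalar (degree~$0$) parts separately using that $1$ is the neutral element of $\mathcal{W}$, bilinearity of $\diamond$ together with \eqref{uit87t8} expresses $P_1\diamond P_2$ as a finite sum of terms $\la\wick{(i+j)},f^{(i)}\otimes g^{(j)}\ra$, each of which is in $\mathcal{P}$; hence $\mathcal{P}$ is closed under $\diamond$ and is an algebra for the addition and the Wick multiplication with neutral element $1$. For \eqref{gytr78or} I would apply Proposition~\ref{hjuyfuyf} termwise to \eqref{guftyrf7yr}, obtaining $IP=(f^{(0)},f^{(1)},\dots,f^{(n)},0,0,\dots)$, and then note that by Corollary~\ref{hgigtitg8i} these polynomials are exactly the elements of $\mathcal{P}$, so that $I\mathcal{P}$ is precisely $\mathcal{G}_{\mathrm{fin}}^{\mathrm a}(\mathcal{H})$. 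Finally, \eqref{tyed65} reduces, again by bilinearity of $\diamond$ and of $\otimes$, to $P_1=\la\wick n,f^{(n)}\ra$ and $P_2=\la\wick m,g^{(m)}\ra$, where $I(P_1\diamond P_2)=I\la\wick{(n+m)},f^{(n)}\otimes g^{(m)}\ra=f^{(n)}\otimes g^{(m)}=IP_1\otimes IP_2$ by \eqref{uit87t8} and Proposition~\ref{hjuyfuyf}.

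I expect no serious obstacle here: the substantive content has been front-loaded into Proposition~\ref{hguyfgu} (orthogonalization equals Wick product) and Proposition~\ref{hjuyfuyf} (the dictionary $I\la\wick n,f^{(n)}\ra=f^{(n)}$), so the corollary is essentially a transport-of-structure argument along the unitary $I$. The only point needing a line of genuine, if routine, verification is the associativity and well-definedness of $\diamond$ on $\mathcal{W}$, and even that is already asserted in the text.
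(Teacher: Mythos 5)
Your proposal is correct and follows essentially the same route as the paper: the authors likewise obtain $\mathcal{P}\subset\mathcal{W}$ from Corollary~\ref{hgigtitg8i} and formula \eqref{hjguyt8u}, derive \eqref{uit87t8} directly from \eqref{hjguyt8u} (equivalently \eqref{jigity9i}), deduce closure under $\diamond$ from \eqref{uit87t8} together with Corollary~\ref{hgigtitg8i}, and get \eqref{gytr78or} and \eqref{tyed65} from Proposition~\ref{hjuyfuyf}. Your explicit verification of associativity of $\diamond$ on $\mathcal{W}$ is a small, correct addition that the paper leaves implicit.
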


\begin{proof}
Formula \eqref{gytr78or} follows from Proposition~\ref{hjuyfuyf} and Corollary~\ref{hgigtitg8i}.
The inclusion $\mathcal{P}\subset\mathcal{W}$
follows Corollary~\ref{hgigtitg8i} and  Proposition~\ref{hguyfgu},
see in particular formula \eqref{hjguyt8u}.
Formula \eqref{uit87t8} follows from \eqref{jigity9i} (equivalently from \eqref{hjguyt8u}).
Formula \eqref{uit87t8} and Corollary~\ref{hgigtitg8i} imply that,
for any $P_1,P_2\in\mathcal{P}$, we have $P_1\diamond P_2\in\mathcal{P}$.
By Proposition~\ref{hjuyfuyf} and \eqref{uit87t8},
\[
I\large(\la \wick{n},f^{(n)}\ra \diamond \la \wick m,g^{(m)}\ra\large)=f^{(n)}\otimes g^{(m)},
\]
from which formula \eqref{tyed65} follows.
\end{proof}

\section{Algebras for Wick multiplication}\label{vcytd657i}

Our next aim is to extend the Wick multiplication from $\mathcal{P}$
to a wider class of elements of $L^2(\mu)$.
By Corollary~\ref{oii-0pu},
this problem is equivalent to extending the usual tensor product
from $\mathcal{G}_{\mathrm{fin}}^{\mathrm a}(\mathcal{H})$
to a wider class of vectors from the $q$-Fock space $\mathcal{F}_q(\mathcal{H})$.
Note that $\mathcal{F}_q(\mathcal{H})$ is not closed under
the tensor product, see e.g.\ \cite[Proposition~2.5]{AS}
for the proof of this statement for $q=0$.

Let us first recall some standard notations from $q$-calculus.
For $n\in\N$,
\begin{align*}
[n]_q:=&1+q+q^2+\dots+q^{n-1}=\frac{1-q^n}{1-q},\\
 [n]_q!:=&[1]_q[2]_q\dotsm[n]_q=\frac{(1-q)(1-q^2)\dotsm(1-q^n)}{(1-q)^n},\quad [0]_q!:=1,\\
 {n\choose i}_q:=&\frac{[n]_q!}{[i]_q![n-i]_q!}\,,\quad i=0,1,\dots,n.
 \end{align*}

Below, for a real Hilbert space $\mathcal{H}$ and $c>0$,
we denote by $\mathcal{H}c$ the real Hilbert space which coincides with $\mathcal{H}$ as a set
and which has scalar product $(\cdot,\cdot)_{\mathcal{H}c}:=c(\cdot,\cdot)_\mathcal{H}$.

\begin{lemma}\label{ftyr7o}
Define the real Hilbert space
\[
\mathcal{G}_q(\mathcal{H}):=\bigoplus_{n=0}^\infty\mathcal{H}^{\otimes n}[n]_q!\,.
\]
Then $\mathcal{G}_q(\mathcal{H})$
is densely and continuously embedded into $\mathcal{F}_q(\mathcal{H})$.
\end{lemma}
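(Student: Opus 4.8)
The plan is to show that the identity map from $\mathcal{G}_q(\mathcal{H})$ to $\mathcal{F}_q(\mathcal{H})$ is bounded with dense range, by working level by level in the direct sum. Since both spaces are graded by $n$, it suffices to compare the $n$-th level norms: on $\mathcal{H}^{\otimes n}[n]_q!$ the squared norm of $f^{(n)}$ is $[n]_q!\,(f^{(n)},f^{(n)})_{\mathcal{H}^{\otimes n}}$, whereas on $\mathcal{F}_q^{(n)}(\mathcal{H})$ it is $(P_q^{(n)} f^{(n)},f^{(n)})_{\mathcal{H}^{\otimes n}}$. So the continuity of the embedding will follow once I establish the operator inequality
\begin{equation*}
0 \le P_q^{(n)} \le C_n\, \mathrm{Id}_{\mathcal{H}^{\otimes n}}
\end{equation*}
with a constant $C_n$ controlled by $[n]_q!$; in fact I expect $C_n = [n]_q!$ or a comparable bound to work, so that the embedding is in fact a contraction (or a uniformly bounded family of maps on each level, which is all that is needed for continuity of the direct sum map).

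First I would recall from Bo\.zejko--Speicher \cite{BS} that $P_q^{(n)}$ is strictly positive, which already gives the lower bound and ensures $\mathcal{G}_q(\mathcal{H})$ and $\mathcal{F}_q(\mathcal{H})$ have genuine Hilbert space structures. For the upper bound, the key point is to estimate the operator norm of $P_q^{(n)} = \sum_{\pi \in S_n} q^{\mathrm{inv}(\pi)} U_\pi$, where $U_\pi$ is the (unitary) permutation operator on $\mathcal{H}^{\otimes n}$. A crude triangle-inequality bound gives $\|P_q^{(n)}\| \le \sum_{\pi \in S_n} |q|^{\mathrm{inv}(\pi)} = \prod_{k=1}^{n}(1 + |q| + \dots + |q|^{k-1}) = [n]_{|q|}!$, using the classical generating-function identity $\sum_{\pi \in S_n} t^{\mathrm{inv}(\pi)} = \prod_{k=1}^n [k]_t$. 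When $q \ge 0$ this is exactly $[n]_q!$ and we are done with $C_n = [n]_q!$; for $q < 0$ one gets $[n]_{|q|}!$, and since $[n]_{|q|}! / [n]_q!$ is bounded uniformly in $n$ (both behave like the infinite product $\prod_{k\ge1}(1 \pm |q|^{k})^{\pm1}$, convergent for $|q|<1$), the embedding is still continuous. I would package this as: there is a constant $C = C(q) < \infty$ with $\|P_q^{(n)}\|_{\mathcal{H}^{\otimes n} \to \mathcal{H}^{\otimes n}} \le C\,[n]_q!$ for all $n$, hence $|f^{(n)}|_{\mathcal{F}_q^{(n)}(\mathcal{H})}^2 \le C\,[n]_q!\,|f^{(n)}|_{\mathcal{H}^{\otimes n}}^2 = C\,|f^{(n)}|_{\mathcal{H}^{\otimes n}[n]_q!}^2$, and summing over $n$ gives $|f|_{\mathcal{F}_q(\mathcal{H})} \le \sqrt{C}\,|f|_{\mathcal{G}_q(\mathcal{H})}$.

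For density, I would note that $\mathcal{G}_{\mathrm{fin}}^{\mathrm a}(\mathcal{H})$ — the finite vectors with algebraic-tensor components — is a common dense subspace: it is dense in $\mathcal{F}_q(\mathcal{H})$ by definition of the $q$-Fock space (finite sums of elementary tensors are dense in each $\mathcal{H}^{\otimes n}$, and finite vectors are dense in the direct sum), and it is contained in $\mathcal{G}_q(\mathcal{H})$ and dense there for the same reason (the $[n]_q!$ weights only rescale each level, they do not change which subsets are dense). Since $\mathcal{G}_{\mathrm{fin}}^{\mathrm a}(\mathcal{H}) \subset \mathcal{G}_q(\mathcal{H}) \subset \mathcal{F}_q(\mathcal{H})$ with the first inclusion dense in the middle space and the composite dense in the last, the middle space is dense in $\mathcal{F}_q(\mathcal{H})$.

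The main obstacle is the uniform-in-$n$ comparison of $[n]_{|q|}!$ and $[n]_q!$ in the case $q \in (-1,0)$: one must check that the ratio stays bounded so that a single constant works across all levels. This is where the hypothesis $|q| < 1$ is essential, via convergence of $\prod_{k=1}^\infty (1 - q^k)$ and $\prod_{k=1}^\infty (1 - |q|^k)$ to nonzero limits. Everything else — strict positivity of $P_q^{(n)}$, the inversion-counting generating function, the level-by-level bookkeeping — is standard, so I would state those crisply and spend the writing effort on making the constant explicit and the $q<0$ case clean.
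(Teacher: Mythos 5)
For $q\in[0,1)$ your argument is correct and coincides with the paper's: the paper simply quotes from Bo\.zejko--Speicher that the operator norm of $P_q^{(n)}$ equals $[n]_q!$ and concludes that the level-$n$ inclusion is a contraction, while your triangle-inequality computation $\|P_q^{(n)}\|\le\sum_{\pi\in S_n}|q|^{\operatorname{inv}(\pi)}=[n]_{|q|}!$ reproves exactly that bound; the density argument via $\mathcal{G}_{\mathrm{fin}}^{\mathrm a}(\mathcal{H})$ is also the same.

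The gap is in the case $q\in(-1,0)$: your claim that $[n]_{|q|}!/[n]_q!$ is bounded uniformly in $n$ is false. Writing $[n]_t!=\prod_{k=1}^n\frac{1-t^k}{1-t}$, the numerators $\prod_{k=1}^n(1-t^k)$ do converge, but the denominators contribute $(1-t)^{-n}$, so $[n]_{|q|}!\asymp c_{|q|}\,(1-|q|)^{-n}$ while $[n]_q!\asymp c_q\,(1+|q|)^{-n}$, whence
\[
\frac{[n]_{|q|}!}{[n]_q!}=\prod_{k=1}^n\frac{[k]_{|q|}}{[k]_q}\ \asymp\ \mathrm{const}\cdot\Big(\frac{1+|q|}{1-|q|}\Big)^{n}\longrightarrow\infty .
\]
Your heuristic ``both behave like the infinite product $\prod(1\pm|q|^k)^{\pm1}$'' drops exactly these geometric factors. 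Nor can a sharper operator-norm estimate rescue the step: on the totally antisymmetric tensor $\sum_{\sigma\in S_n}\operatorname{sgn}(\sigma)\,e_{\sigma(1)}\otimes\dots\otimes e_{\sigma(n)}$ the operator $P_q^{(n)}$ acts as multiplication by $\sum_{\pi}(-q)^{\operatorname{inv}(\pi)}=[n]_{|q|}!$, so $\|P_q^{(n)}\|=[n]_{|q|}!$ exactly, and the norm of the level-$n$ inclusion $\mathcal{H}^{\otimes n}[n]_q!\to\mathcal{F}_q^{(n)}(\mathcal{H})$ is $\bigl([n]_{|q|}!/[n]_q!\bigr)^{1/2}$, which is unbounded in $n$. (To be fair, this is a defect of the lemma as stated and not only of your proof: the paper's appeal to $\|P_q^{(n)}\|=[n]_q!$ is valid only for $q\ge0$, and Section~5 of the paper in effect switches to the smaller space with weights $[n]_{|q|}!$ for exactly this reason.) The clean repair is to prove the statement for $\bigoplus_{n}\mathcal{H}^{\otimes n}[n]_{|q|}!$, for which your triangle-inequality bound immediately yields a contractive embedding for all $q\in(-1,1)$.
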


\begin{proof}
We first note that $\mathcal{G}_{\mathrm{fin}}^{\mathrm a}(\mathcal{H})$
is a dense subset of both spaces $\mathcal{F}_q(\mathcal{H})$ and $\mathcal{G}_q(\mathcal{H})$.
By \cite[Remark on p.~525]{BS},
the norm of the bounded linear operator $P_q^{(n)}$ in $\mathcal{H}^{\otimes n}$
is equal to  $[n]_q!$\,.
Hence, for each $F=(f^{(n)})_{n=0}^\infty\in\mathcal{G}_{\mathrm{fin}}^{\mathrm a}(\mathcal{H})$,
it holds that
\[
\|F\|_{\mathcal{F}_q(\mathcal{H})}^2\le\sum_{n=0}^\infty
\|f^{(n)}\|_{\mathcal{H}^{\otimes n}}^2[n]_q!\,=\|F\|_{\mathcal{G}_q(\mathcal{H})}^2.
\]
This immediately implies the assertion.
\end{proof}

Analogously to $\mathcal{G}_q(\mathcal{H})$, we define,
for a real separable Hilbert space $\mathcal{H}$, $r>0$, and $\alpha\ge1$,  a real Hilbert space
\[
\mathcal{G}_q(\mathcal{H},r,\alpha)
:=\bigoplus_{n=0}^\infty\mathcal{G}^{(n)}_q(\mathcal{H},r,\alpha),
\quad \text{where }
\mathcal{G}^{(n)}_q(\mathcal{H},r,\alpha):=\mathcal{H}^{\otimes n}r^n([n]_q!)^\alpha.
\]

\begin{lemma}\label{yderi576}
Let $\mathcal{H}_+$ be a Hilbert space
which is densely embedded into $\mathcal{H}$
and satisfies $\|\cdot\|_\mathcal{H}\le\|\cdot\|_{\mathcal{H}_+}$.
(In particular, we may have $\mathcal{H}_+=\mathcal{H}$.)
Let $\alpha\ge1$ and let
\begin{equation}\label{yur786}
r\ge\max\{1,(1+q)^{1-\alpha}\}.
\end{equation}
Then, the Hilbert space $\mathcal{G}_q(\mathcal{H}_+,r,\alpha)$
is densely and continuously embedded into $\mathcal{F}_q(\mathcal{H})$.
\end{lemma}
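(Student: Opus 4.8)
The plan is to reduce the claim to two estimates: a norm comparison between $\mathcal{G}_q(\mathcal{H}_+,r,\alpha)$ and $\mathcal{G}_q(\mathcal{H})$, and the continuous embedding $\mathcal{G}_q(\mathcal{H})\hookrightarrow\mathcal{F}_q(\mathcal{H})$ already proved in Lemma~\ref{ftyr7o}. Density is immediate: the finite vectors $\mathcal{G}_{\mathrm{fin}}^{\mathrm a}(\mathcal{H}_+)$ lie in $\mathcal{G}_q(\mathcal{H}_+,r,\alpha)$, and since $\mathcal{H}_+$ is dense in $\mathcal{H}$ and each $\mathcal{H}^{\otimes_{\mathbf a}n}$ is built from algebraic tensors, these vectors are dense in $\mathcal{F}_q(\mathcal{H})$. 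So the real content is the norm bound
\[
\|F\|_{\mathcal{F}_q(\mathcal{H})}^2\le C\,\|F\|_{\mathcal{G}_q(\mathcal{H}_+,r,\alpha)}^2
\]
for all $F=(f^{(n)})_{n=0}^\infty$ in the finite-vector space, with a constant $C$ independent of $F$.

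To get this, I would work componentwise. For a fixed $n$, the operator-norm computation of Bo\.zejko--Speicher (cited in the proof of Lemma~\ref{ftyr7o}) gives
\[
\|f^{(n)}\|_{\mathcal{F}_q^{(n)}(\mathcal{H})}^2=(P_q^{(n)}f^{(n)},f^{(n)})_{\mathcal{H}^{\otimes n}}\le[n]_q!\,\|f^{(n)}\|_{\mathcal{H}^{\otimes n}}^2\le[n]_q!\,\|f^{(n)}\|_{\mathcal{H}_+^{\otimes n}}^2,
\]
the last inequality because $\|\cdot\|_{\mathcal{H}}\le\|\cdot\|_{\mathcal{H}_+}$ propagates to tensor powers. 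On the other hand the $n$-th norm in $\mathcal{G}_q(\mathcal{H}_+,r,\alpha)$ is $r^n([n]_q!)^\alpha\|f^{(n)}\|_{\mathcal{H}_+^{\otimes n}}^2$. Hence it suffices to check that the ratio
\[
\frac{[n]_q!}{r^n([n]_q!)^\alpha}=\frac{1}{r^n([n]_q!)^{\alpha-1}}
\]
is bounded by a constant uniformly in $n$. When $q\in[0,1)$ one has $[n]_q!\ge1$, so $([n]_q!)^{\alpha-1}\ge1$ (as $\alpha\ge1$) and $r\ge1$ gives $r^n\ge1$, so the ratio is $\le1$ and $C=1$ works. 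When $q\in(-1,0)$ the factors $[k]_q$ can dip below $1$, but they are bounded below by $1+q$ for every $k\ge2$ (for $k=1$, $[1]_q=1$), so $[n]_q!\ge(1+q)^{\,?}$; more carefully $[n]_q!\ge\prod_{k\ge2}\min(1,[k]_q)$, and a crude bound of the shape $[n]_q!\ge(1+q)^{n}$ (valid for $q\in(-1,0)$ since each $[k]_q\ge1+q$ and $1+q<1$) yields $([n]_q!)^{\alpha-1}\ge(1+q)^{n(\alpha-1)}$, whence
\[
\frac{1}{r^n([n]_q!)^{\alpha-1}}\le\frac{1}{\big(r(1+q)^{\alpha-1}\big)^n}\le1
\]
precisely under the hypothesis $r\ge(1+q)^{1-\alpha}$, i.e. $r(1+q)^{\alpha-1}\ge1$. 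Summing over $n$ gives $\|F\|_{\mathcal{F}_q(\mathcal{H})}^2\le\|F\|_{\mathcal{G}_q(\mathcal{H}_+,r,\alpha)}^2$, which is the desired continuity, and the embedding extends to the completion.

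The main obstacle I anticipate is pinning down the correct uniform lower bound for $[n]_q!$ when $q$ is negative, since the naive product telescoping must genuinely use the hypothesis \eqref{yur786} and nothing stronger. The key inequality is $[k]_q\ge1+q$ for all $k\ge1$ when $q\in(-1,0)$ — this should be verified by noting $[k]_q=1+q(1+q+\dots+q^{k-2})$ and that the geometric-type sum $1+q+\dots+q^{k-2}$ lies in $(0,1]$ for $q\in(-1,0)$ (it is an alternating partial sum of a series with value $1/(1-q)\in(1/2,1)$), so $q$ times it lies in $[q,0)$, giving $[k]_q\in[1+q,1)$. Once that elementary bound is in hand, the rest is bookkeeping; the separation into the cases $q\ge0$ (where $r\ge1$ alone suffices) and $q<0$ (where the full strength of $r\ge(1+q)^{1-\alpha}$ is needed) makes the role of hypothesis \eqref{yur786} transparent, and taking $\mathcal{H}_+=\mathcal{H}$ recovers a strengthening of Lemma~\ref{ftyr7o} with the extra parameters $r,\alpha$.
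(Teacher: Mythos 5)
Your proof is correct and follows essentially the same route as the paper: reduce to the componentwise inequality $r^n([n]_q!)^{\alpha-1}\ge 1$ via the bound $\|P_q^{(n)}\|=[n]_q!$ from Lemma~\ref{ftyr7o}, then split into the cases $q\in[0,1)$ (where $[n]_q!\ge1$) and $q\in(-1,0)$ (where $[k]_q\ge 1+q$ gives $[n]_q!\ge(1+q)^n$ and hypothesis \eqref{yur786} finishes). Your additional verification of $[k]_q\ge 1+q$ and of the density claim just fills in details the paper leaves implicit.
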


\begin{proof}
In view of Lemma~\ref{ftyr7o},
it suffices to prove that, for all $n\in\mathbb N$,
\begin{equation}\label{ghdyy7}
r^n([n]_q!)^{\alpha-1}\ge1.
\end{equation}
If $q\in[0,1)$, then for each $n\in\mathbb N$, $[n]_q\ge1$,
and if $q\in(-1,0)$, then $[n]_q\ge1+q$.
From here inequality \eqref{ghdyy7} easily follows.
\end{proof}

Below, we will use the notions of a projective limit
and an inductive limit of Hilbert spaces.
For the definition, see e.g.\ \cite{BK}.

\begin{theorem}\label{cdre6i5}
Let $\mathcal{H}_+$ be a Hilbert space as in Lemma~\ref{yderi576}.
Let $\alpha\ge1$. We define
\begin{equation}\label{frtde6e}
\mathcal{G}_q(\mathcal{H}_+,\alpha):=\projlim_{r\ge1}\mathcal{G}_q(\mathcal{H}_+,r,\alpha).
\end{equation}
Then $\mathcal{G}_q(\mathcal{H}_+,\alpha)$
is densely and continuously embedded into $\mathcal{F}_q(\mathcal{H})$
and is an algebra under the addition and the tensor multiplication.
Furthermore, for any $s>r\ge1$, there exists $C_1>0$ such that
\begin{equation}\label{tye756ier7ir}
\|F\otimes G\|_{\mathcal{G}_q(\mathcal{H}_+,r,\alpha)}
\le C_1\|F\|_{\mathcal{G}_q(\mathcal{H}_+,s,\alpha)}\|G\|_{\mathcal{G}_q(\mathcal{H}_+,s,\alpha)}
\end{equation}
for any $F,G\in \mathcal{G}_q(\mathcal{H}_+,\alpha)$.
\end{theorem}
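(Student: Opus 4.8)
The plan is to reduce everything to the single norm estimate \eqref{tye756ier7ir}, since the other assertions follow from it together with the results already established. First I would observe that $\mathcal{G}_q(\mathcal{H}_+,\alpha)$ is densely and continuously embedded into $\mathcal{F}_q(\mathcal{H})$: continuity is immediate because each $\mathcal{G}_q(\mathcal{H}_+,r,\alpha)$ with $r$ satisfying \eqref{yur786} is continuously embedded in $\mathcal{F}_q(\mathcal{H})$ by Lemma~\ref{yderi576}, and the projective limit carries the strongest of these topologies; density follows from the fact that $\mathcal{G}_{\mathrm{fin}}^{\mathrm a}(\mathcal{H}_+)$ is contained in $\mathcal{G}_q(\mathcal{H}_+,\alpha)$ and is already dense in $\mathcal{F}_q(\mathcal{H})$ (it contains $\mathcal{G}_{\mathrm{fin}}^{\mathrm a}(\mathcal{H})$ up to the dense embedding $\mathcal H_+\hookrightarrow\mathcal H$, or one argues directly). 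Granting \eqref{tye756ier7ir}, the algebra property is automatic: if $F,G\in\mathcal{G}_q(\mathcal{H}_+,\alpha)$ then for every $r\ge1$ we have $F\otimes G\in\mathcal{G}_q(\mathcal{H}_+,r,\alpha)$ by applying \eqref{tye756ier7ir} with any $s>r$, hence $F\otimes G$ lies in the projective limit.

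So the real work is \eqref{tye756ier7ir}. Writing $F=(f^{(n)})_{n\ge0}$, $G=(g^{(n)})_{n\ge0}$, the $n$-th component of $F\otimes G$ is $\sum_{k=0}^n f^{(k)}\otimes g^{(n-k)}$, and by the triangle inequality in $\mathcal{H}_+^{\otimes n}$ and $\|f^{(k)}\otimes g^{(n-k)}\|_{\mathcal H_+^{\otimes n}}=\|f^{(k)}\|_{\mathcal H_+^{\otimes k}}\|g^{(n-k)}\|_{\mathcal H_+^{\otimes(n-k)}}$, I would bound
\[
\|F\otimes G\|_{\mathcal{G}_q(\mathcal{H}_+,r,\alpha)}^2
\le\sum_{n=0}^\infty r^n([n]_q!)^\alpha\Bigl(\sum_{k=0}^n\|f^{(k)}\|\,\|g^{(n-k)}\|\Bigr)^2.
\]
The key combinatorial estimate is to compare $r^n([n]_q!)^\alpha$ with $s^k s^{n-k}([k]_q!)^\alpha([n-k]_q!)^\alpha$. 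Since $[n]_q!=\binom{n}{k}_q[k]_q![n-k]_q!$ and, for $q\in(-1,1)$, the $q$-binomial coefficient is bounded by the ordinary binomial coefficient times a constant depending only on $q$ — more precisely $0<\binom{n}{k}_q\le C_q\binom{n}{k}$ with $C_q=\prod_{j\ge1}\max\{1,(1-q^j)^{-1}\}<\infty$ when $q\in[0,1)$, and a similar finite product bound when $q\in(-1,0)$ using $|1-q^j|$ — one gets $([n]_q!)^\alpha\le C_q^\alpha\binom{n}{k}^\alpha([k]_q!)^\alpha([n-k]_q!)^\alpha\le C_q^\alpha 2^{\alpha n}([k]_q!)^\alpha([n-k]_q!)^\alpha$ using $\binom nk\le 2^n$. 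Choosing the split point so that $r\,2^\alpha\le s$, i.e.\ comparing $r^n 2^{\alpha n}$ against $s^n=s^k s^{n-k}$, I would absorb the factor and obtain
\[
r^n([n]_q!)^\alpha\Bigl(\sum_{k=0}^n\|f^{(k)}\|\,\|g^{(n-k)}\|\Bigr)^2
\le C_q^\alpha(n+1)\sum_{k=0}^n s^k([k]_q!)^\alpha\|f^{(k)}\|^2\; s^{n-k}([n-k]_q!)^\alpha\|g^{(n-k)}\|^2 .
\]
Here the Cauchy--Schwarz inequality in the $k$-sum turns the square of the sum into $(n+1)$ times the sum of the squares.

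To finish, I would sum over $n$. The presence of the polynomial factor $(n+1)$ is handled by enlarging $s$ slightly once more: replace $s$ by an intermediate radius $t$ with $r2^\alpha < t < s$ and note $(n+1) t^n \le C_1 s^n$ for a constant $C_1=C_1(t,s)$, so that $(n+1)s^k s^{n-k}\cdot(\text{stuff})$ with radius $t$ is dominated by $s^k s^{n-k}\cdot(\text{stuff})$; after reindexing the double sum $\sum_n\sum_{k\le n}=\sum_k\sum_{m}$ it factorizes as $\bigl(\sum_k s^k([k]_q!)^\alpha\|f^{(k)}\|^2\bigr)\bigl(\sum_m s^m([m]_q!)^\alpha\|g^{(m)}\|^2\bigr)=\|F\|_{\mathcal{G}_q(\mathcal{H}_+,s,\alpha)}^2\|G\|_{\mathcal{G}_q(\mathcal{H}_+,s,\alpha)}^2$. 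This yields \eqref{tye756ier7ir} with $C_1$ depending on $q$, $\alpha$, $r$, $s$; the requirement $s>r$ is exactly what gives the room to absorb both the exponential factor $2^{\alpha n}$ from the $q$-binomial bound and the polynomial factor $(n+1)$ from Cauchy--Schwarz. The main obstacle I anticipate is the clean uniform bound on the $q$-binomial coefficients $\binom nk_q$ for negative $q$, where $[n]_q$ need not be increasing and one must argue via $[n]_q = (1-q^n)/(1-q)$ lying in a fixed compact subinterval of $(0,\infty)$; this is where I would cite or reprove the estimate $[n]_q!\le K_q\,[k]_q!\,[n-k]_q!\,\binom nk$ carefully, and it also pins down why the hypothesis $\alpha\ge1$ (rather than $\alpha>0$) is convenient, since for $\alpha\ge1$ one can pull the exponent through the ordinary-binomial bound without worrying about the direction of the inequality.
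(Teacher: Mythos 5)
Your overall strategy is sound and genuinely more elementary than the paper's: where the paper reduces \eqref{tye756ier7ir} to the convergence of the double series $\sum_{n}(r_1/s)^n\sum_{i=0}^n{n\choose i}_q^{\alpha}$ and handles it via the power-sum inequality (this is where $\alpha\ge1$ actually enters) together with the $q$-binomial generating-function identity $\sum_{i}z^i{n+i\choose i}_q=\prod_{j=0}^{n}(1-zq^j)^{-1}$ from Andrews, you replace all of that by a pointwise bound on individual $q$-binomial coefficients plus Cauchy--Schwarz in the $k$-sum. Both routes share the same skeleton (triangle inequality, Cauchy--Schwarz producing the $(n+1)$, an intermediate radius to absorb polynomial growth), so the only real divergence is in how the $q$-combinatorics is controlled.

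There is, however, one concrete shortfall: your bound ${n\choose k}_q\le C_q{n\choose k}\le C_q 2^{n}$ forces you to absorb an extra factor $2^{\alpha n}$ into $s^n$, so your argument requires $s>2^{\alpha}r$ and therefore does not prove \eqref{tye756ier7ir} in the full range $s>r$ claimed by the theorem (monotonicity of the norms in $s$ only propagates your estimate upward, not down into $(r,2^{\alpha}r)$). This matters for the literal statement, even though it is harmless for the algebra and continuity assertions. The fix is one line: the $q$-binomial coefficients are in fact \emph{uniformly} bounded, ${n\choose k}_q=\prod_{j=1}^{k}\frac{1-q^{n-k+j}}{1-q^{j}}\le\frac{\prod_{m\ge1}(1+|q|^{m})}{\prod_{m\ge1}(1-|q|^{m})}=:C_q<\infty$ for all $q\in(-1,1)$, with no ordinary binomial factor needed. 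Using $({[n]_q!})^{\alpha}\le C_q^{\alpha}([k]_q!)^{\alpha}([n-k]_q!)^{\alpha}$ you only need to absorb the polynomial factor $(n+1)$, which any $s>r$ accommodates, and the rest of your computation goes through verbatim. (A small side remark: raising a positive inequality to the power $\alpha>0$ preserves it regardless of whether $\alpha\ge1$, so your closing comment about where $\alpha\ge1$ is needed is off the mark --- in your version of the argument the hypothesis is only used through Lemma~\ref{yderi576}, i.e.\ for the embedding into $\mathcal{F}_q(\mathcal{H})$, whereas in the paper's version it is also used for $\sum_i a_i^{\alpha}\le(\sum_i a_i)^{\alpha}$.)
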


\begin{remark}
Evidently, when $\mathcal{H}_+=\mathcal{H}$,
the space $\mathcal{G}_q(\mathcal{H},\alpha)$
is an extension of $\mathcal{G}_{\mathrm{fin}}^{\mathrm a}(\mathcal{H})$.
\end{remark}

\begin{proof}[Proof of Theorem~\ref{cdre6i5}]
For any $F=(f^{(n)})_{n=0}^\infty,\,G=(g^{(n)})_{n=0}^\infty\in \mathcal{G}_{\mathrm{fin}}^{\mathrm a}(\mathcal{H}_+)$,
\begin{align}
\|F\otimes G\|^2_{\mathcal{G}_q(\mathcal{H}_+,r,\alpha)}
&=\sum_{n=0}^\infty\bigg\|\sum_{i=0}^n f^{(i)}\otimes g^{(n-i)}\bigg\|_{\mathcal{H}_+^{\otimes n}}^2\,r^n\,([n]_q!)^\alpha\notag\\
&\quad\le \sum_{n=0}^\infty\left(\sum_{i=0}^n\|f^{(i)}\|_{\mathcal{H}_+^{\otimes i}}\|g^{(n-i)}\|_{\mathcal{H}_+^{\otimes (n-i)}}\right)^2r^n\,([n]_q!)^\alpha\notag\\
 &\quad\le \sum_{n=0}^\infty r^n\,([n]_q!)^\alpha\,(n+1)\sum_{i=0}^n \|f^{(i)}\|^2_{\mathcal{H}_+^{\otimes i}}\|g^{(n-i)}\|_{\mathcal{H}_+^{\otimes (n-i)}}^2\,.\label{tye6ue}
\end{align}
Note that,  for each $n$,
\begin{equation}\label{tyd64e}
\|f^{(n)}\|_{\mathcal{H}_+^{\otimes n}}^2 \le \frac{\|F\|^2_{\mathcal{G}_q(\mathcal{H}_+,s,\alpha)}}{s^n\,([n]_q!)^\alpha}\,.
\end{equation}
Let $r<r_1<s$ and choose $C_2>0$ so that $r^n (n+1)\le C_2r_1^n$ for all $n$.
Then, by \eqref{tye6ue} and \eqref{tyd64e}, we get
\begin{align}
\|F\otimes G\|^2_{\mathcal{G}_q(\mathcal{H}_+,r,\alpha)}
&\le  C_2 \|F\|^2_{\mathcal{G}_q(\mathcal{H}_+,s,\alpha)}\|G\|^2_{\mathcal{G}_q(\mathcal{H}_+,s,\alpha)}\notag\\
&\qquad\times \left(\sum_{n=0}^\infty \bigg(\frac{r_1}s\bigg)^n \sum_{i=0}^n {n\choose i}_q ^\alpha\right)\,.
\label{vuytrt687ot}
\end{align}
Denote
\[
z:=\bigg(\frac{r_1}s\bigg)^{1/\alpha}.
\]
Then we obtain that
\[
\sum_{n=0}^\infty \bigg(\frac{r_1}s\bigg)^n \sum_{i=0}^n {n\choose i}_q ^\alpha
=\sum_{n=0}^\infty\sum_{i=0}^n\left[z^n{n\choose i}_q\right]^\alpha
   \le\left[\sum_{n=0}^\infty\sum_{i=0}^n z^n{n\choose i}_q\right]^\alpha.
\]
On the other hand, by using \cite[p.~17 and p.~36]{A}, we get
\begin{align*}
\sum_{n=0}^\infty\sum_{i=0}^n z^n{n\choose i}_q
&=\sum_{i=0}^\infty\sum_{n=i}^\infty z^n{n\choose i}_q
 =\sum_{i=0}^\infty\sum_{n=0}^\infty z^{n+i}{n+i\choose i}_q\notag\\
&=\sum_{n=0}^\infty z^n \sum_{i=0}^\infty z^i{n+i\choose i}_q\notag\\
&=\sum_{n=0}^\infty\frac{z^n}{(1-z)(1-zq)\dotsm(1-zq^{n})} \notag\\
&\le\sum_{n=0}^\infty\frac{z^n}{(1-z)(1-z|q|)\dotsm(1-z|q|^{n})} \notag\\
&\le\sum_{n=0}^\infty z^n\prod_{i=0}^\infty\frac1{1-z|q|^i}.
\end{align*}
Therefore, we see that
\begin{equation} \label{ftdstrse64u}
\sum_{n=0}^\infty \bigg(\frac{r_1}s\bigg)^n \sum_{i=0}^n {n\choose i}_q ^\alpha
\le \left[\sum_{n=0}^\infty z^n\prod_{i=0}^\infty\frac1{1-z|q|^i}\right]^\alpha.
\end{equation}
Thus, by \eqref{vuytrt687ot} and \eqref{ftdstrse64u},
we only need to prove that $\prod_{i=0}^\infty\frac1{1-z|q|^i}<\infty$.
But this holds since
\[
\sum_{i=0}^\infty\bigg(\frac1{1-z|q|^i}-1\bigg)
=\sum_{i=0}^\infty\frac{z|q|^i}{1-z|q|^i}\le\frac{z}{1-z}\sum_{i=0}^\infty |q|^i<\infty.
\]
\end{proof}

Recall that a real nuclear space $\Phi$ is defined as a projective limit
\begin{equation}\label{crte6u}
\Phi=\projlim_{\tau\in T} \mathcal{H}_\tau\,,
\end{equation}
where $T$ is an index set, $(H_\tau)_{\tau\in T}$
is a family of separable Hilbert spaces which are directed by embedding:
for any $\tau_1,\tau_2\in T$,
there exists $\tau_3\in T$ such that
the space $H_{\tau_3}$ is densely and continuously embedded into both  $H_{\tau_1}$ and $H_{\tau_2}$,
and furthermore, for each $\tau_1\in T$ there exists $\tau_2\in T$ such that
the embedding operator of $H_{\tau_2}$ into $H_{\tau_1}$ is of Hilbert--Schmidt class.
We define, for $\alpha\ge1$,
\begin{equation}\label{gyd66de}
\mathcal{G}_q(\Phi,\alpha):=\projlim_{(\tau,r)\in T\times[1,\infty)}\mathcal{G}_q(\mathcal{H}_\tau,r,\alpha).
\end{equation}
Analogously to \cite{KLS,KSWY}, one can show that $\mathcal{G}_q(\Phi,\alpha)$ is also a nuclear space.

\begin{corollary}\label{fdy7r6}
Let $\Phi$ be a nuclear space as in \eqref{crte6u}.
Assume that $\Phi$ is densely and continuously embedded into $\mathcal{H}$.
Then, for each $\alpha\ge1$, $\mathcal{G}_q(\Phi,\alpha)$
is densely and continuously embedded into $\mathcal{F}_q(\mathcal{H})$.
Furthermore, $\mathcal{G}_q(\Phi,\alpha)$ is an algebra with respect to the addition and the tensor multiplication,
with tensor multiplication being a continuous mapping from $\mathcal{G}_q(\Phi,\alpha)^2$ into $\mathcal{G}_q(\Phi,\alpha)$.
\end{corollary}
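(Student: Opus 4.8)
The plan is to reduce the corollary to its single-Hilbert-space prototype, Theorem~\ref{cdre6i5}, since by \eqref{gyd66de} the space $\mathcal{G}_q(\Phi,\alpha)$ is nothing but the projective limit, over the additional parameter $\tau\in T$, of the spaces $\mathcal{G}_q(\mathcal{H}_\tau,\cdot,\alpha)$ already handled there. First I would record a harmless normalization. As is customary for a rigging $\Phi\hookrightarrow\mathcal{H}$ with $\Phi=\projlim_\tau\mathcal{H}_\tau$ (see \cite{BK}), after passing to a cofinal subfamily of the $\mathcal{H}_\tau$ if necessary one may assume that each $\mathcal{H}_\tau$ is densely and continuously embedded in $\mathcal{H}$; replacing then $\mathcal{H}_\tau$ by $\mathcal{H}_\tau c_\tau$ with $c_\tau\ge1$ chosen so that $\|\cdot\|_{\mathcal{H}}\le\sqrt{c_\tau}\,\|\cdot\|_{\mathcal{H}_\tau}$ changes neither $\Phi$ nor the topology in \eqref{gyd66de} (the scalar $c_\tau$ is absorbed into the parameter $r$ and the rescaled norms form a cofinal family). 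Hence I may assume from the outset that $\|\cdot\|_{\mathcal{H}}\le\|\cdot\|_{\mathcal{H}_\tau}$ for every $\tau$, i.e.\ each $\mathcal{H}_\tau$ is a Hilbert space as in Lemma~\ref{yderi576}, so that Theorem~\ref{cdre6i5} applies to it.

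Next I would dispose of the dense continuous embedding into $\mathcal{F}_q(\mathcal{H})$. Fixing any single $\tau$, Theorem~\ref{cdre6i5} gives that $\mathcal{G}_q(\mathcal{H}_\tau,\alpha)$ is continuously embedded in $\mathcal{F}_q(\mathcal{H})$, while the inclusion $\mathcal{G}_q(\Phi,\alpha)\hookrightarrow\mathcal{G}_q(\mathcal{H}_\tau,\alpha)$ is continuous because, by \eqref{gyd66de}, the family of seminorms defining $\mathcal{G}_q(\Phi,\alpha)$ contains those defining $\mathcal{G}_q(\mathcal{H}_\tau,\alpha)$; composing gives continuity of $\mathcal{G}_q(\Phi,\alpha)\hookrightarrow\mathcal{F}_q(\mathcal{H})$. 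Density is independent and elementary: the finite vectors $\mathcal{G}_{\mathrm{fin}}^{\mathrm a}(\Phi)$ lie in $\mathcal{G}_q(\Phi,\alpha)$, and they are already dense in $\mathcal{F}_q(\mathcal{H})$ because $\Phi$ is dense in $\mathcal{H}$, whence $\Phi^{\otimes_{\mathbf a}n}$ is dense in $\mathcal{H}^{\otimes n}$ (the operator $(P_q^{(n)})^{1/2}$ being bounded), and finite vectors are dense in the $q$-Fock space.

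The core of the proof is the algebra property together with the continuity of $\otimes$ on $\mathcal{G}_q(\Phi,\alpha)^2$, and here the point is that no new estimate is needed. Since $\mathcal{G}_q(\Phi,\alpha)$ carries the projective-limit topology \eqref{gyd66de}, the bilinear map $\otimes$ into it is continuous as soon as, for every $(\tau,r)\in T\times[1,\infty)$, one finds $(\tau',s)\in T\times[1,\infty)$ and $C>0$ with $\|F\otimes G\|_{\mathcal{G}_q(\mathcal{H}_\tau,r,\alpha)}\le C\,\|F\|_{\mathcal{G}_q(\mathcal{H}_{\tau'},s,\alpha)}\,\|G\|_{\mathcal{G}_q(\mathcal{H}_{\tau'},s,\alpha)}$ for all $F,G\in\mathcal{G}_q(\Phi,\alpha)$. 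Crucially one may take $\tau'=\tau$: inequality \eqref{tye756ier7ir} of Theorem~\ref{cdre6i5}, which has the \emph{same} Hilbert space on both sides, applies verbatim with $\mathcal{H}_+$ replaced by $\mathcal{H}_\tau$ (its proof uses only that $\|\cdot\|_{\mathcal{H}_\tau^{\otimes n}}$ is a cross-norm together with the $q$-binomial identities, never the ambient $\mathcal{H}$), yielding for some $s>r$ a constant $C_1>0$ with
\[
\|F\otimes G\|_{\mathcal{G}_q(\mathcal{H}_\tau,r,\alpha)}\le C_1\,\|F\|_{\mathcal{G}_q(\mathcal{H}_\tau,s,\alpha)}\,\|G\|_{\mathcal{G}_q(\mathcal{H}_\tau,s,\alpha)}
\]
first for $F,G\in\mathcal{G}_{\mathrm{fin}}^{\mathrm a}(\Phi)$ and then, by density of $\mathcal{G}_{\mathrm{fin}}^{\mathrm a}(\Phi)$ in $\mathcal{G}_q(\mathcal{H}_\tau,s,\alpha)$, for all $F,G\in\mathcal{G}_q(\Phi,\alpha)$. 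This gives the desired continuity; in particular $F\otimes G\in\mathcal{G}_q(\Phi,\alpha)$ whenever $F,G\in\mathcal{G}_q(\Phi,\alpha)$, and with the neutral element $\Omega\in\mathcal{G}_{\mathrm{fin}}^{\mathrm a}(\Phi)\subset\mathcal{G}_q(\Phi,\alpha)$ this makes $\mathcal{G}_q(\Phi,\alpha)$ an algebra under addition and $\otimes$.

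The only real thing to watch is the bookkeeping of the two independent projective-limit parameters: one must be certain that controlling $F\otimes G$ in the $(\tau,r)$-norm requires raising only the weight parameter $r$ and not the nuclear parameter $\tau$ --- which is exactly the shape of \eqref{tye756ier7ir} --- and that the normalization step and the density of $\mathcal{G}_{\mathrm{fin}}^{\mathrm a}(\Phi)$ in the individual Hilbert norms are legitimate. Beyond this, there is no genuinely new analytic difficulty over Theorem~\ref{cdre6i5}.
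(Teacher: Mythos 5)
Your proof is correct and follows essentially the same route as the paper's: the density of $\mathcal{G}_{\mathrm{fin}}^{\mathrm a}(\Phi)$ in $\mathcal{F}_q(\mathcal{H})$ is argued identically, and the algebra property and continuity are deduced from Theorem~\ref{cdre6i5} applied with $\mathcal{H}_+=\mathcal{H}_\tau$ for each $\tau$, exactly as the paper does (it simply states ``the other statements follow from Theorem~\ref{cdre6i5}''). Your extra care with the normalization $\|\cdot\|_{\mathcal{H}}\le\|\cdot\|_{\mathcal{H}_\tau}$ and with the fact that only the weight parameter $r$, not the nuclear parameter $\tau$, needs to be raised is a legitimate filling-in of details the paper leaves implicit.
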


\begin{proof}
Analogously to $\mathcal{G}_{\mathrm{fin}}^{\mathrm a}(\mathcal{H})$,
we define the linear space $\mathcal{G}_{\mathrm{fin}}^{\mathrm a}(\Phi)$
which consists of all  finite vectors $(f^{(0)},f^{(1)},\dots,f^{(n)},0,0,\dots)$,
where $f^{(0)}\in\R$, $f^{(i)}\in\Phi^{\otimes_{\mathrm a}i}$ for $i=1,\dots,n$, $n\in\N$.
Since $\Phi$ is dense in $\mathcal{H}$,
$\Phi^{\otimes_{\mathrm a}n}$ is dense in $\mathcal{H}^{\otimes n}$ for each $n\in\N$.
Hence, $\mathcal{G}_{\mathrm{fin}}^{\mathrm a}(\Phi)$
is dense in $\mathcal{F}_q(\mathcal{H})$.
But $\mathcal{G}_{\mathrm{fin}}^{\mathrm a}(\Phi)\subset\mathcal{G}_q(\Phi,\alpha)$.
Hence $\mathcal{G}_q(\Phi,\alpha)$ is dense in $\mathcal{F}_q(\mathcal{H})$.
The other statements of Corollary~\ref{fdy7r6} follow from Theorem~\ref{cdre6i5}.
\end{proof}

\begin{remark}\label{futfr687r}
Let $q\in(-1,0)$.
The obvious inequality $[n]_q\le[n]_{|q|}$ implies that,
instead of the spaces $\mathcal{G}_q(\mathcal{H}_+,\alpha)$
and $\mathcal{G}_q(\Phi,\alpha)$ in Theorem~\ref{cdre6i5} and Corollary~\ref{fdy7r6}, respectively,
we can use the smaller spaces $\mathcal{G}_{|q|}(\mathcal{H}_+,\alpha)$
and $\mathcal{G}_{|q|}(\Phi,\alpha)$.
We will use this observation below.
\end{remark}

\section{Wick calculus for generalized  functionals of $q$-white noise}\label{jkgt7u6r7}

In view of the unitary operator $I:L^2(\mu)\to\mathcal{F}_q(\mathcal{H})$,
Theorem~\ref{cdre6i5}, Corollary~\ref{fdy7r6}, and Remark~\ref{futfr687r},
$\mathcal{G}_{|q|}(\mathcal{H}_+,\alpha)$ and $\mathcal{G}_{|q|}(\Phi,\alpha)$
may be thought of as spaces of test functionals of noncommutative $q$-white noise $\omega$.
Our next aim is to extend the Wick product (equivalently the tensor product) to the dual spaces,
which may be thought of as spaces of generalized functionals of $\omega$.

For a real separable Hilbert space $\mathcal{H}$ and $n\in\N$,
we define $\mathbb{F}_q^{(n)}(\mathcal{H})$ as the Hilbert space
which coincides with $\mathcal{H}^{\otimes n}$ as a set and which has scalar product
\[
( f^{(n)},g^{(n)})_{\mathbb{F}^{(n)}_q(\mathcal{H})}
:=( P_q^{(n)} f^{(n)},P_q^{(n)}g^{(n)})_{\mathcal{H}^{\otimes n}}\,.
\]
In particular,
\[
\|f^{(n)}\|_{\mathbb{F}_q^{(n)}(\mathcal{H})}=\|P_q^{(n)}f^{(n)}\|_{\mathcal{H}^{\otimes n}}.
\]
We also set $\mathbb{F}_q^{(0)}(\mathcal{H}):=\R$.
For $r\ge1$ and $\alpha\in\R$, we then define a Hilbert space
\[
\mathbb{F}_q(\mathcal{H},r,\alpha):=\bigoplus_{n=0}^\infty
\mathbb{F}^{(n)}_q(\mathcal{H},r,\alpha),
~~ \text{where }
\mathbb{F}^{(n)}_q(\mathcal{H},r,\alpha):=
\mathbb{F}_q^{(n)}(\mathcal{H}) r^n([n]_{|q|}!)^\alpha.
\]
Note that we use $[n]_{|q|}!$ rather than $[n]_q!$
in the definition of $\mathbb{F}_q(\mathcal{H},r,\alpha)$.

\begin{proposition}\label{tyri745r}
Let $\mathcal{H}_+$ be a Hilbert space as in Lemma~\ref{yderi576}.
Consider the standard triple of Hilbert spaces
\[
\mathcal{H}_+\subset\mathcal{H}\subset\mathcal{H}_-\,,
\]
where $\mathcal{H}_-$ is the dual space of $\mathcal{H}_+$ with respect to the center space $\mathcal{H}$,
i.e., the dual pairing between elements of $\mathcal{H}_-$ and $\mathcal{H}_+$
is given by an extension of the scalar product in $\mathcal{H}$.
Let $\alpha\ge1$ and let $r\ge 1$.
Then we get the standard triple of Hilbert spaces
\[
\mathcal{G}_{|q|}(\mathcal{H}_+,r,\alpha)
\subset \mathcal{F}_q(\mathcal{H})
\subset \mathbb{F}_q(\mathcal{H}_-\,,r^{-1},-\alpha).
\]
\end{proposition}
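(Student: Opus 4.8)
The plan is to check the two properties defining a standard triple: first, that $\mathcal{G}_{|q|}(\mathcal{H}_+,r,\alpha)$ is densely and continuously embedded in $\mathcal{F}_q(\mathcal{H})$; second, that $\mathbb{F}_q(\mathcal{H}_-,r^{-1},-\alpha)$ is, isometrically, the dual space of $\mathcal{G}_{|q|}(\mathcal{H}_+,r,\alpha)$ relative to the center $\mathcal{F}_q(\mathcal{H})$. The first property I would establish exactly as in the proof of Lemma~\ref{ftyr7o}: bounding each permutation operator in $\mathcal{H}^{\otimes n}$ by $1$ gives $\|P_q^{(n)}\|_{\mathcal{H}^{\otimes n}}\le\sum_{\pi\in S_n}|q|^{\operatorname{inv}(\pi)}=[n]_{|q|}!$, whence for $F=(f^{(n)})_{n=0}^\infty\in\mathcal{G}_{\mathrm{fin}}^{\mathrm a}(\mathcal{H}_+)$
\begin{align*}
\|F\|^2_{\mathcal{F}_q(\mathcal{H})}&=\sum_{n=0}^\infty(P_q^{(n)}f^{(n)},f^{(n)})_{\mathcal{H}^{\otimes n}}\le\sum_{n=0}^\infty[n]_{|q|}!\,\|f^{(n)}\|^2_{\mathcal{H}^{\otimes n}}\\
&\le\sum_{n=0}^\infty r^n([n]_{|q|}!)^\alpha\|f^{(n)}\|^2_{\mathcal{H}_+^{\otimes n}}=\|F\|^2_{\mathcal{G}_{|q|}(\mathcal{H}_+,r,\alpha)},
\end{align*}
using $\|\cdot\|_{\mathcal{H}}\le\|\cdot\|_{\mathcal{H}_+}$ together with $r^n([n]_{|q|}!)^{\alpha-1}\ge1$ (true since $r\ge1$, $\alpha\ge1$ and $[n]_{|q|}!\ge1$); density holds because $\mathcal{G}_{\mathrm{fin}}^{\mathrm a}(\mathcal{H}_+)\subset\mathcal{G}_{|q|}(\mathcal{H}_+,r,\alpha)$ is already dense in $\mathcal{F}_q(\mathcal{H})$.

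For the duality I would reduce to a single level $n$, since all three spaces are orthogonal direct sums in $n$ and an orthogonal direct sum of standard triples is again one (the embedding norms above being uniformly bounded by $1$). Here one uses that the $n$-th Hilbert tensor power of $\mathcal{H}_+\subset\mathcal{H}\subset\mathcal{H}_-$ is a standard triple $\mathcal{H}_+^{\otimes n}\subset\mathcal{H}^{\otimes n}\subset\mathcal{H}_-^{\otimes n}$, the pairing $\langle\cdot,\cdot\rangle$ between $\mathcal{H}_-^{\otimes n}$ and $\mathcal{H}_+^{\otimes n}$ extending the scalar product of $\mathcal{H}^{\otimes n}$. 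Each permutation operator, hence $P_q^{(n)}$, extends to a bounded operator on $\mathcal{H}_-^{\otimes n}$ of norm at most $[n]_{|q|}!$, and by the invertibility of $P_q^{(n)}$ for $|q|<1$ it is a bounded bijection of $\mathcal{H}_-^{\otimes n}$; in particular $\mathbb{F}_q^{(n)}(\mathcal{H}_-)$ is a genuine Hilbert space into which $\mathcal{H}^{\otimes n}$ is densely embedded.

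Then, for $g\in\mathcal{H}^{\otimes n}=\mathcal{F}_q^{(n)}(\mathcal{H})$ and $f\in\mathcal{H}_+^{\otimes n}$, one has $(g,f)_{\mathcal{F}_q^{(n)}(\mathcal{H})}=(P_q^{(n)}g,f)_{\mathcal{H}^{\otimes n}}=\langle P_q^{(n)}g,f\rangle$, so the scalar product of the center extends to the pairing $\langle\langle g,f\rangle\rangle:=\langle P_q^{(n)}g,f\rangle$ on $\mathcal{H}_-^{\otimes n}\times\mathcal{H}_+^{\otimes n}$. Writing $\lambda_n:=r^n([n]_{|q|}!)^\alpha$ and using that $\mathcal{H}_-^{\otimes n}$ is the dual of $\mathcal{H}_+^{\otimes n}$,
\begin{align*}
\sup_{\|f\|_{\mathcal{G}_{|q|}^{(n)}(\mathcal{H}_+,r,\alpha)}\le1}\big|\langle\langle g,f\rangle\rangle\big|&=\lambda_n^{-1/2}\sup_{\|f\|_{\mathcal{H}_+^{\otimes n}}\le1}\big|\langle P_q^{(n)}g,f\rangle\big|\\
&=\lambda_n^{-1/2}\|P_q^{(n)}g\|_{\mathcal{H}_-^{\otimes n}}=\|g\|_{\mathbb{F}_q^{(n)}(\mathcal{H}_-,r^{-1},-\alpha)},
\end{align*}
so the rigged-dual norm on $\mathcal{F}_q^{(n)}(\mathcal{H})$ coincides with the norm of the complete space $\mathbb{F}_q^{(n)}(\mathcal{H}_-,r^{-1},-\alpha)$, in which $\mathcal{H}^{\otimes n}$ is dense; hence the latter is the dual of $\mathcal{G}_{|q|}^{(n)}(\mathcal{H}_+,r,\alpha)$ relative to $\mathcal{F}_q^{(n)}(\mathcal{H})$. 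Summing over $n$, with the global pairing $\langle\langle G,F\rangle\rangle=\sum_n\langle P_q^{(n)}g^{(n)},f^{(n)}\rangle$ (convergent by the Cauchy--Schwarz inequality), gives the asserted triple, the continuous dense embedding $\mathcal{F}_q(\mathcal{H})\subset\mathbb{F}_q(\mathcal{H}_-,r^{-1},-\alpha)$ being automatic for a rigged dual.

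I do not expect a deep obstacle. The points requiring care are (i) justifying that $P_q^{(n)}$ extends to a bounded, boundedly invertible operator on the dual tensor power $\mathcal{H}_-^{\otimes n}$, so that $\mathbb{F}_q^{(n)}(\mathcal{H}_-,r^{-1},-\alpha)$ is complete with $\mathcal{F}_q^{(n)}(\mathcal{H})$ dense in it, and (ii) tracking the weight $\lambda_n=r^n([n]_{|q|}!)^\alpha$ correctly through the Hilbert-space duality so that the parameters $(r,\alpha)$ dualize precisely to $(r^{-1},-\alpha)$.
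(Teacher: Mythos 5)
Your proof is correct and follows essentially the same route as the paper: reduce to each fixed level $n$, establish the dense continuous embedding via the bound $\|P_q^{(n)}\|\le[n]_{|q|}!$ together with $r^n([n]_{|q|}!)^{\alpha-1}\ge1$ (this is Lemma~\ref{yderi576} combined with Remark~\ref{futfr687r}), and then identify the negative space by the rigged--Hilbert-space construction before summing over $n$. The only difference is that the paper simply cites \cite[Chapter~1, Section~1]{BK} for the duality step, whereas you carry out the computation of the dual norm (and the needed bounded invertibility of $P_q^{(n)}$ on $\mathcal{H}_-^{\otimes n}$) explicitly, which is a useful but not essentially different elaboration.
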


\begin{proof}
By Lemma~\ref{yderi576} and Remark~\ref{futfr687r},
the Hilbert space $\mathcal{G}_{|q|}^{(n)}(\mathcal{H}_+,r,\alpha)$
is densely and continuously embedded into $\mathcal{F}^{(n)}_q(\mathcal{H})$.
Hence, using the construction of a rigged Hilbert space,
see e.g.\ \cite[Chapter~1, Section~1]{BK},
we easily obtain the  standard triple
\[
\mathcal{G}_{|q|}^{(n)}(\mathcal{H}_+,r,\alpha)
\subset\mathcal{F}_q^{(n)}(\mathcal{H})
\subset\mathbb{F}^{(n)}_q(\mathcal{H}_-,r^{-1},-\alpha).
\]
From here the statement follows.
\end{proof}

Proposition~\ref{tyri745r} implies that the dual of the space
$\mathcal{G}_{|q|}(\mathcal{H}_+,\alpha)$ with respect to the center space $\mathcal{F}_q(\mathcal{H})$
is the space
\[
\operatornamewithlimits{ind\,lim}_{r\ge1}\mathbb{F}_q(\mathcal{H}_-,r^{-1},-\alpha)
=:\mathbb{F}_q(\mathcal{H}_-,-\alpha).
\]

\begin{theorem}\label{rte6u4}
The space $\mathbb{F}_q(\mathcal{H}_-\,,-2)$
is an algebra under the addition and the tensor multiplication.
Furthermore, for any $r>s\ge1$, we have
\begin{equation}\label{cfxtdr6ue7i}
\|F\otimes G\|_{\mathbb{F}_q(\mathcal{H}_-,r^{-1},-2)}\le \bigg(\frac{r}{r-s}\bigg)^{1/2} \|F\|_{\mathbb{F}_q(\mathcal{H}_-,s^{-1},-2)}\|G\|_{\mathbb{F}_q(\mathcal{H}_-,r^{-1},-2)}\,,
\end{equation}
where $F\in \mathbb{F}_q(\mathcal{H}_-,s^{-1},-2)$,
$G\in \mathbb{F}_q(\mathcal{H}_-,r^{-1},-2)$.
\end{theorem}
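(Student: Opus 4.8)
The plan is to work on the level of homogeneous components and reduce everything to a scalar estimate involving the norms $\|P_q^{(n)}\cdot\|$. Fix $F=(f^{(n)})_{n\ge0}$ and $G=(g^{(n)})_{n\ge0}$ with finitely many nonzero terms (the general case follows by density/continuity, since we will obtain a bound uniform in the truncation). The $n$-th component of $F\otimes G$ is $\sum_{i=0}^n f^{(i)}\otimes g^{(n-i)}$, and by definition of the norm in $\mathbb{F}_q(\mathcal{H}_-,r^{-1},-2)$ we must estimate
\[
\|F\otimes G\|^2_{\mathbb{F}_q(\mathcal{H}_-,r^{-1},-2)}=\sum_{n=0}^\infty\Big\|P_q^{(n)}\sum_{i=0}^n f^{(i)}\otimes g^{(n-i)}\Big\|^2_{\mathcal{H}_-^{\otimes n}}\,r^{-n}\,([n]_{|q|}!)^{-2}.
\]
First I would establish the crucial submultiplicativity property of the operators $P_q^{(n)}$ on simple tensors, namely that
\[
P_q^{(n)}\big(h^{(i)}\otimes h^{(n-i)}\big)
\]
can be written, up to the combinatorial splitting of $S_n$ into cosets indexed by which $i$ of the $n$ positions are occupied by the first block, in terms of $P_q^{(i)}h^{(i)}$, $P_q^{(n-i)}h^{(n-i)}$, and interleaving permutations carrying powers of $q$; the number of such interleavings with inversion count weighting is exactly $\binom{n}{i}_{|q|}$ after taking norms. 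This gives the pointwise bound
\[
\big\|P_q^{(n)}(f^{(i)}\otimes g^{(n-i)})\big\|_{\mathcal{H}_-^{\otimes n}}\le \binom{n}{i}_{|q|}\,\big\|P_q^{(i)}f^{(i)}\big\|_{\mathcal{H}_-^{\otimes i}}\big\|P_q^{(n-i)}g^{(n-i)}\big\|_{\mathcal{H}_-^{\otimes(n-i)}},
\]
which is the same type of estimate used implicitly in Bo\.zejko--Speicher for the operator norm $\|P_q^{(n)}\|=[n]_{|q|}!$; I expect this to be the main obstacle, since one must track the inversion statistic under the shuffle decomposition carefully and pass to absolute values of $q$ to get a clean bound.

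Granting that, write $a_i:=\|P_q^{(i)}f^{(i)}\|_{\mathcal{H}_-^{\otimes i}}$ and $b_j:=\|P_q^{(j)}g^{(j)}\|_{\mathcal{H}_-^{\otimes j}}$, so that
\[
\|F\|^2_{\mathbb{F}_q(\mathcal{H}_-,s^{-1},-2)}=\sum_i a_i^2\,s^{-i}([i]_{|q|}!)^{-2},\qquad
\|G\|^2_{\mathbb{F}_q(\mathcal{H}_-,r^{-1},-2)}=\sum_j b_j^2\,r^{-j}([j]_{|q|}!)^{-2}.
\]
Using $([n]_{|q|}!)^{-1}\binom{n}{i}_{|q|}=([i]_{|q|}!)^{-1}([n-i]_{|q|}!)^{-1}$ and the triangle inequality, the $n$-th term of $\|F\otimes G\|^2$ is bounded by
\[
r^{-n}\Big(\sum_{i=0}^n \frac{a_i}{[i]_{|q|}!}\cdot\frac{b_{n-i}}{[n-i]_{|q|}!}\Big)^2.
\]
Next I would apply Cauchy--Schwarz to the inner sum in the split form $r^{-n/2}=r^{-i/2}r^{-(n-i)/2}$ together with an extra factor $s^{-i/2}s^{i/2}$: bounding $\sum_i u_i v_i \le (\sum_i u_i^2/w_i)^{1/2}(\sum_i w_i v_i^2)^{1/2}$ with $w_i=(s/r)^{i}$ (legitimate since $s<r$), so $\sum_i w_i=\sum_i (s/r)^i \le r/(r-s)$ geometrically. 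Summing the resulting double series in $n$ and $i$ and reindexing $(i,n-i)\mapsto(i,j)$ factors it as a product of the two weighted $\ell^2$ sums above, yielding precisely the constant $\big(\tfrac{r}{r-s}\big)^{1/2}$ and inequality \eqref{cfxtdr6ue7i}. Finally, since the right-hand side is finite whenever $F\in\mathbb{F}_q(\mathcal{H}_-,s^{-1},-2)$ and $G\in\mathbb{F}_q(\mathcal{H}_-,r^{-1},-2)$, the inductive-limit space $\mathbb{F}_q(\mathcal{H}_-,-2)$ is closed under $\otimes$, and the estimate is continuous in each argument, which gives the algebra structure and completes the proof. The classical ($q\to1$) and free ($q=0$) cases are recovered by specializing $[n]_{|q|}!$.
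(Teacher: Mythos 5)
Your proposal is correct and follows essentially the same route as the paper: the key step is exactly the paper's Lemma on $\|P_q^{(n)}(f^{(i)}\otimes g^{(n-i)})\|\le\binom{n}{i}_{|q|}\|P_q^{(i)}f^{(i)}\|\,\|P_q^{(n-i)}g^{(n-i)}\|$, proved via the shuffle decomposition of $S_n$, additivity of the inversion statistic, and MacMahon's identity, followed by the $q$-factorial cancellation and a Cauchy--Schwarz with geometric weights $(s/r)^i$ producing the constant $r/(r-s)$. The only difference is cosmetic: you apply a single weighted Cauchy--Schwarz to the convolution sum in $i$, whereas the paper expands the square into a double sum over $i,j$ and applies Cauchy--Schwarz twice; both yield the identical bound.
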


\begin{remark}
There is an important difference between formulas \eqref{tye756ier7ir} and \eqref{cfxtdr6ue7i}:
in the latter formula, one uses the same norm for  $F\otimes G$ and $G$,
namely the norm of the space $\mathbb{F}_q(\mathcal{H}_-,r^{-1},-2)$.
This observation will be crucial for the proof of Theorem~\ref{vuf7t} below.
\end{remark}

\begin{remark}
By taking the limit as $q\to 1$,
we easily conclude that inequality \eqref{cfxtdr6ue7i}
also holds in the classical (commutative) case $q=1$
for the space $\mathbb{F}_1(\mathcal{H}_-,-2)$
in which the corresponding $\mathbb{F}_1^{(n)}(\mathcal{H}_-)$ spaces
consist of symmetrized elements $(n!)^{-1}P_1^{(n)}f^{(n)}$ with $f^{(n)}\in\mathcal{H}_-^{\otimes n}$.
(The operator $(n!)^{-1}P_1^{(n)}$ is the symmetrization projection.)
In particular, choosing $\mathcal{H}_-=\mathcal{H}$,
we get inequality \eqref{cfxtdr6ue7i} on $\mathbb{F}_1(\mathcal{H},-2)$.
The latter space is the Kondratiev space of regular generalized functions,
constructed by Grothaus, Kondratiev and Streit \cite{GKL}, see also \cite{GKU}.
Formula \eqref{cfxtdr6ue7i} is then a version of the V\r{a}ge inequality,
originally derived in \cite{V} (see also \cite{HOUZ}),
for the Kondratiev space of (non-regular) generalized functions \cite{K,KLS,KSWY}.
Note that, for the space $\mathbb{F}_1(\mathcal{H},-2)$,
Theorem~\ref{vuf7t} below was proven in \cite{GKU} by different methods,
without the use of a V\r{a}ge-type inequality.
So, even in the commutative case $q=1$, inequality \eqref{cfxtdr6ue7i} is a new result.

Alpay and Salomon \cite{AS2012} introduced a concept of a V\r{a}ge space
on which a V\r{a}ge-type inequality holds.
In the noncommutative, free setting ($q=0$),
a V\r{a}ge-type inequality was derived in \cite{AS2013,AS},  see also \cite{AJS}.
Note that, even for $q=0$, the form of inequality \eqref{cfxtdr6ue7i}
significantly differs from the result in \cite{AS2013,AS}.
\end{remark}

\begin{remark}
Let $G=\mathbb{Z}$ be the additive group of integers and consider its  semigroup 
$S=\mathbb{N}_0=\{0,1,2,\dots\}\subset G$. Let $\mu$ be the counting measure on  $G$. For any $r\ge 1$, define the measure $\mu_r$ on $S$ by 
\[
\frac{d\mu_r}{d\mu}(n):=\left([n]_{|q|}!\right)^{-2}r^{-n}.
\]
Then for any $r>s\ge1$, it holds that 
\[
\int_{S} \frac{d\mu_r}{d\mu_s}(n)\,d\mu(n)=\frac{r}{r-s}
\]
and for any $m,n\in S$,
\begin{align*}
\frac{d\mu_r}{d\mu}(n+m)
&=\left([n+m]_{|q|}!\right)^{-2}r^{-(n+m)}
\le \left([n]_{|q|}!\,[m]_{|q|}!\right)^{-2}r^{-(n+m)}\\
&=\frac{d\mu_r}{d\mu}(n)\frac{d\mu_r}{d\mu}(m).
\end{align*}
Therefore, in the case where $\mathcal H_-$ is a one-dimensional real Hilbert space (equivalently $\mathcal H_-=\mathbb R$),  
the result of Theorem \ref{rte6u4} follows from 
Theorem~3.4 in Alpay and Salomon \cite{AS2013}.

Note that Theorem 3.4 in  \cite{AS2013} implies both the  
V\r{a}ge inequality  in the classical setting and the V\r{a}ge-type inequality in the free setting, see Section~6 in \cite{AS2013}. This is shown by a proper choice of a discrete group $G$, a semigroup $S\subset G$, and measures $\mu_r$  on $S$ ($r\in\mathbb N$). In particular, for each $s\in\mathbb N$, there should exist $r\in\mathbb N$, $r>s$, such that
\begin{equation}\label{tyer7o}
\int_S \frac{d\mu_r}{d\mu_s}(\alpha)\,d\mu(\alpha)<\infty,\end{equation}
where $\mu$ is the counting measure. But condition \eqref{tyer7o} implies that the operator of embedding of $L^2(S,\mu_s)$ into $L^2(S,\mu_r)$ is of Hilbert--Schmidt class.  However, in the case of an infinite-dimensional space $\mathcal H_-$ and $r>s\ge 1$, the operator of embedding of  $\mathbb{F}_q(\mathcal{H}_-,s^{-1},-2)$ into $\mathbb{F}_q(\mathcal{H}_-,r^{-1},-2)$ is not of Hilbert--Schmidt class. Hence, in the general case, the result of Theorem \ref{rte6u4} does not  follow from 
 \cite[Theorem~3.4]{AS2013}.  
\end{remark}

\begin{proof}[Proof of Theorem \ref{rte6u4}]
We will first prove the following lemma, which can be of independent interest.

\begin{lemma}\label{jkigiy8tjbg}
Let $f^{(m)}\in\mathcal{H}^{\otimes m}$, $g^{(n)}\in\mathcal{H}^{\otimes n}$, $m,n\in\mathbb N$.
Then
\begin{equation}\label{iugti9t}
\|f^{(m)}\otimes g^{(n)}\|_{\mathbb{F}_q^{(m+n)}(\mathcal{H})}\le {m+n\choose m}_{|q|}\,
\|f^{(m)}\|_{\mathbb{F}_q^{(m)}(\mathcal{H})}
\|g^{(n)}\|_{\mathbb{F}_q^{(n)}(\mathcal{H})}.
\end{equation}
\end{lemma}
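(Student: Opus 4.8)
The plan is to reduce the inequality to a combinatorial estimate on the operator $P_q^{(m+n)}$ applied to a product tensor. Recall that $\|f^{(m)}\otimes g^{(n)}\|_{\mathbb{F}_q^{(m+n)}(\mathcal{H})} = \|P_q^{(m+n)}(f^{(m)}\otimes g^{(n)})\|_{\mathcal{H}^{\otimes(m+n)}}$, and similarly for the two factors on the right. So I would first look for a factorization of the operator $P_q^{(m+n)}$, acting on a tensor of the form $f^{(m)}\otimes g^{(n)}$, in terms of $P_q^{(m)}\otimes P_q^{(n)}$ composed with an intertwining operator built from the coset representatives of $S_m\times S_n$ in $S_{m+n}$. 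Concretely, every $\pi\in S_{m+n}$ factors uniquely as $\pi=\theta\cdot(\pi_1\times\pi_2)$ where $\pi_1\in S_m$, $\pi_2\in S_n$ act on the first $m$ and last $n$ coordinates, and $\theta$ runs over the $\binom{m+n}{m}$ shuffles (minimal-length coset representatives); moreover $\operatorname{inv}(\pi)=\operatorname{inv}(\theta)+\operatorname{inv}(\pi_1)+\operatorname{inv}(\pi_2)$. This is the standard length-additivity of parabolic cosets in $S_{m+n}$, and it is exactly the identity used by Bo\.zejko--Speicher.

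Using this factorization, I would write
\[
P_q^{(m+n)}(f^{(m)}\otimes g^{(n)}) = \sum_{\theta} q^{\operatorname{inv}(\theta)}\, U_\theta\big((P_q^{(m)}f^{(m)})\otimes(P_q^{(n)}g^{(n)})\big),
\]
where $U_\theta$ is the unitary operator on $\mathcal{H}^{\otimes(m+n)}$ permuting tensor factors according to the shuffle $\theta$. Here one must be careful that $U_\theta$ acts on the already-permuted product, so I would verify the exact bookkeeping on simple tensors $f_1\otimes\dots\otimes f_m$ and $h_1\otimes\dots\otimes h_n$ and then extend by linearity and continuity. Once this identity is in hand, the triangle inequality gives
\[
\|P_q^{(m+n)}(f^{(m)}\otimes g^{(n)})\|_{\mathcal{H}^{\otimes(m+n)}} \le \sum_{\theta} |q|^{\operatorname{inv}(\theta)}\, \|P_q^{(m)}f^{(m)}\|_{\mathcal{H}^{\otimes m}}\,\|P_q^{(n)}g^{(n)}\|_{\mathcal{H}^{\otimes n}},
\]
since each $U_\theta$ is an isometry and the norm of the product tensor factors as the product of norms. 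Finally, $\sum_{\theta}|q|^{\operatorname{inv}(\theta)}$, the sum over all $(m,n)$-shuffles of $|q|$ to the number of inversions, is precisely the Gaussian binomial coefficient $\binom{m+n}{m}_{|q|}$ by the standard $q$-binomial identity (e.g.\ \cite{A}), which yields \eqref{iugti9t}.

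The main obstacle I anticipate is establishing the operator identity for $P_q^{(m+n)}$ cleanly, in particular getting the order of the factors and the action of $U_\theta$ exactly right: the permutation $\theta$ shuffles the blocks $\{1,\dots,m\}$ and $\{m+1,\dots,m+n\}$ while the inner permutations $\pi_1,\pi_2$ scramble within blocks, and one has to check that applying $\pi_1\times\pi_2$ first and $\theta$ afterward reproduces $\pi$ with the advertised inversion count. This is purely a matter of Coxeter-combinatorics for the symmetric group and is standard, but it requires care with conventions. Everything after that identity is a short estimate plus the classical $q$-binomial summation, so the lemma follows.
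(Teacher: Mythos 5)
Your proposal is correct and follows essentially the same route as the paper: the paper decomposes each $\pi\in S_{m+n}$ into a shuffle (encoded by a subset $A\in\mathfrak S(m+n,m)$ with its inversion count) and a pair of inner permutations, obtains exactly the identity $P_q^{(m+n)}(f^{(m)}\otimes g^{(n)})=\sum_A q^{\operatorname{inv}(A)}U(A)\bigl[(P_q^{(m)}f^{(m)})\otimes(P_q^{(n)}g^{(n)})\bigr]$, and then applies the triangle inequality together with MacMahon's theorem $\sum_A|q|^{\operatorname{inv}(A)}=\binom{m+n}{m}_{|q|}$. Your shuffles $U_\theta$ are precisely the paper's $U(A)$, and the inversion-additivity you invoke is the paper's equation for $\operatorname{inv}(\pi)$, so the two arguments coincide.
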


\begin{proof}
We denote by $\mathfrak S(m+n,m)$ the collection of all subsets
\[
A=\{i_1,i_2,\dots,i_m\}\subset \{1,2,\dots,m+n\}.
\]
We assume that $i_1<i_2<\dots<i_m$. Let also
\[
\{1,2,\dots,m+n\}\setminus A=\{j_1,j_2,\dots,j_n\}
\]
with $j_1<j_2<\dots<j_n$.
We denote by $\operatorname{inv}(A)$ the number of all pairs $(j_k,i_l)$ such that $j_k<i_l$.

For each permutation $\pi\in S_{m+n}$,
there exist unique permutations $\theta\in S_m$, $\nu\in S_n$,
and a set $A\in \mathfrak S(m+n,m)$ as above such that
\begin{gather}
\pi(i_1)=\theta(1),\ \pi(i_2)=\theta(2),\dots,  \pi(i_m)=\theta(m),\notag\\
\pi(j_1)=m+\nu(1),\ \pi(j_2)=m+\nu(2),\dots,\pi(j_n)=m+\nu(n).
\label{bgu8yt8}
\end{gather}
In fact, we first construct a set $A\in \mathfrak S(m+n,m)$ by
\[
A:=\pi^{-1}(\{1,2,\cdots,m\}):=\{i_1,i_2,\cdots,i_m\},
\]
and then the permutations $\theta\in S_m$, $\nu\in S_n$ are constructed as given in \eqref{bgu8yt8}.
Inversely, any $\theta\in S_m$, $\nu\in S_n$,
and $A\in\mathfrak S(m+n,m)$ determine by formula \eqref{bgu8yt8} a permutation $\pi\in S_{m+n}$.
Note that
\begin{equation}\label{hjgftr7e6e}
\operatorname{inv}(\pi)=\operatorname{inv}(\theta)+\operatorname{inv}(\nu)+\operatorname{inv}(A).
\end{equation}

For each $A\in\mathfrak S(m+n,m)$ as above,
we define a unitary operator $U(A):\mathcal{H}^{\otimes(m+n)}\to\mathcal{H}^{\otimes(m+n)}$ by setting,
for any $f_1,f_2,\dots,f_{m+n}\in\mathcal{H}$,
\[
U(A)f_1\otimes f_2\otimes\dots\otimes f_{m+n}:=g_1\otimes g_2\otimes\dots\otimes g_{m+n},
\]
where
\begin{gather*}
g_{i_1}=f_1,\ g_{i_2}=f_2,\dots,g_{i_m}=f_m,\\
g_{j_1}=f_{m+1},\ g_{j_2}=f_{m+2},\dots, g_{j_n}=f_{m+n}
\end{gather*}
(i.e., $U(A)$ swaps the vectors in the tensor product according to $A$
and preserving the order of $f_1,\dots,f_m$ and of $f_{m+1},\dots,f_{m+n}$).
Then, for $\pi\in S_{m+n}$ given by \eqref{bgu8yt8}, we get
\begin{align}\label{gdf7yur87}
&f_{\pi(1)}\otimes f_{\pi(2)}\otimes\dots\otimes f_{\pi(m+n)}\\
&=U(A) f_{\theta(1)}\otimes f_{\theta(2)}\otimes\dots\otimes
f_{\theta(m)}\otimes f_{m+\nu(1)}\otimes f_{m+\nu(2)}\otimes\dots\otimes f_{m+\nu(n)}.
  \nonumber
\end{align}
Thus, by \eqref{hvftdf7yihyu}, \eqref{hjgftr7e6e}, and \eqref{gdf7yur87},
we obtain that
\begin{align*}
&P_q^{(m+n)}f_1\otimes f_2\otimes\dots\otimes f_{m+n}\\
&~~
=\sum_{A\in\mathfrak S(m+n,m)}
q^{\operatorname{inv}(A)}\,
U(A)\left[\left(\sum_{\theta\in S_m}q^{\operatorname{inv}(\theta))} f_{\theta(1)}\otimes f_{\theta(2)}\otimes\dots\otimes
f_{\theta(m)}\right)\right.\\
&\hspace{25mm}\otimes \left.\left(\sum_{\nu\in S_n} q^{\operatorname{inv}(\nu))} f_{m+\nu(1)}\otimes f_{m+\nu(2)}\otimes\dots\otimes f_{m+\nu(n)}\right)\right]\\
&~~=\sum_{A\in\mathfrak S(m+n,m)}q^{\operatorname{inv}(A)}\,U(A)
\bigl[(P_q^{(m)}f_1\otimes f_2\otimes\dots\otimes f_m)\bigr.\\
&\hspace{25mm}\otimes \bigl.(P_q^{(n)}f_{m+1}\otimes f_{m+2}\otimes\dots\otimes f_{m+n})\bigr].
\end{align*}
Hence
\begin{align}
&\|P_q^{(m+n)}f_1\otimes f_2\otimes\dots\otimes f_{m+n}\|_{\mathcal{H}^{\otimes(m+n)}}\notag\\
&\qquad \le \left(\sum_{A\in\mathfrak S(m+n,m)}|q|^{\operatorname{inv}(A)}\right)
  \|P_q^{(m)}f_1\otimes f_2\otimes\dots\otimes f_m\|_{\mathcal{H}^{\otimes m}}\,\notag\\
&\qquad \qquad\times\|P_q^{(n)}f_{m+1}\otimes f_{m+2}\otimes\dots\otimes f_{m+n}\|_{\mathcal{H}^{\otimes n}}.
\label{gigy8iy}
\end{align}
By MacMahon theorem, see e.g.\ \cite[Section~3.4]{A},
\begin{equation} \sum_{A\in\mathfrak S(m+n,m)}
|q|^{\operatorname{inv}(A)}={m+n\choose m}_{|q|}.\label{tdy6e64}
\end{equation}
Thus, by \eqref{gigy8iy} and \eqref{tdy6e64},
inequality \eqref{iugti9t} holds for $f^{(m)}=f_1\otimes\dots\otimes f_m$, $g^{(n)}=f_{m+1}\otimes\dots\otimes f_{m+n}$.
The case of general $f^{(m)}$ and $g^{(n)}$ can be done by analogy.
 \end{proof}

Let $r>s\ge1$ and let $F=(f^{(n)})_{n=0}^\infty$,
$G=(g^{(n)})_{n=0}^\infty\in \mathcal{G}_{\mathrm{fin}}^{\mathrm a}(\mathcal{H}_-)$.
Then by using Lemma~\ref{jkigiy8tjbg}, we obtain that
\begin{align*}
&\|F\otimes G\|_{\mathbb{F}(\mathcal{H}_-,r^{-1},-2)}^2=\sum_{n=0}^\infty\bigg\|
\sum_{i=0}^n f^{(i)}\otimes g^{(n-i)}
\bigg\|^2_{\mathbb{F}_q^{(n)}(\mathcal{H}_-)}r^{-n}\big([n]_{|q|}!\big)^{-2}\\
&\quad\le\sum_{n=0}^\infty\left(\sum_{i=0}^n
\|f^{(i)}\otimes g^{(n-i)}\|_{\mathbb{F}_q^{(n)}(\mathcal{H}_-)}\right)^2 r^{-n}\big([n]_{|q|}!\big)^{-2}\\
&\quad\le\sum_{n=0}^\infty\left(\sum_{i=0}^n {n\choose i}_{|q|}
\|f^{(i)}\|_{\mathbb{F}_q^{(i)}(\mathcal{H}_-)}\|g^{(n-i)}\|_{\mathbb{F}_q^{(n-i)}(\mathcal{H}_-)}
\right)^2 r^{-n}\big([n]_{|q|}!\big)^{-2}.
\end{align*}
Put
\[
a_i=\|f^{(i)}\|_{\mathbb{F}_q^{(i)}(\mathcal{H}_-)},
\qquad
b_{n-i}=\|g^{(n-i)}\|_{\mathbb{F}_q^{(n-i)}(\mathcal{H}_-)}.
\]
Then by using the Cauchy inequality,
we obtain that
\begin{align*}
&\|F\otimes G\|_{\mathbb{F}(\mathcal{H}_-,r^{-1},-2)}^2\\
&\quad\le \sum_{n=0}^\infty\left(\sum_{i=0}^n {n\choose i}_{|q|} a_ib_{n-i}\right)^2 r^{-n}\big([n]_{|q|}!\big)^{-2}\\
&\quad =\sum_{n=0}^\infty \sum_{i,j=0}^n\left(
  [i]_{|q|}!\, [j]_{|q|}!\, [n-i]_{|q|}!\,[n-j]_{|q|}!\right)^{-1} a_ib_{n-i}a_j b_{n-j}r^{-n}\\
&\quad=\sum_{i,j=0}^\infty a_ia_j\left([i]_{|q|}!\, [j]_{|q|}!\right)^{-1}\\
  &\qquad\qquad\times \sum_{n\ge\max\{i,j\}}b_{n-i}b_{n-j}\left([n-i]_{|q|}!\,[n-j]_{|q|}!\right)^{-1}r^{-n}\\
&\quad\le \sum_{i,j=0}^\infty a_ia_j\left([i]_{|q|}!\, [j]_{|q|}!\right)^{-1}
    \left(\sum_{n\ge\max\{i,j\}}b_{n-i}^2\left([n-i]_{|q|}!\right)^{-2}r^{-n}\right)^{1/2}\\
  &\qquad\qquad\times\left(\sum_{n\ge\max\{i,j\}}b_{n-j}^2\left([n-j]_{|q|}!\right)^{-2}r^{-n}\right)^{1/2}\\
 &\quad\le \left(\sum_{i=0}^\infty a_i\left([i]_{|q|}!\right)^{-1}r^{-i/2}\right)^2
\left(\sum_{n=0}^\infty b_n^2\left([n]_{|q|}!\right)^{-2}r^{-n}\right)\\
&\quad\le\left(\sum_{i=0}^\infty\left(\frac{r}{s}\right)^{-i}\right)
  \|F\|^2_{\mathbb{F}_q(\mathcal{H},s^{-1},-2)}\|G\|^2_{\mathbb{F}_q(\mathcal{H},r^{-1},-2)}.
\end{align*}
From here the theorem follows.
\end{proof}

Analogously to \cite[Section 4]{AS},
we will now derive several consequences of inequality~\eqref{cfxtdr6ue7i}.

\begin{theorem}\label{vuf7t}
Let $\phi(z)=\sum_{n=0}^\infty a_nz^n$ be an analytic function of a complex variable $z$
defined in a neighborhood of zero.
Assume that the series $\sum_{n=0}^\infty a_nz^n$
converges absolutely in the open disk of radius $R>0$.
Then, for each $F=(f^{(n)})_{n=0}^\infty\in\mathbb{F}_q(\mathcal{H}_-\,,-2)$ such that $|f^{(0)}|<R$,
we have
\[
\phi^{\otimes}(F):=\sum_{n=0}^\infty a_n F^{\otimes n}\in
\mathbb{F}_q(\mathcal{H}_-\,,-2)_{\mathbb C}.
\]
Here $\mathbb{F}_q(\mathcal{H}_-\,,-2)_{\mathbb C}$
denotes the complexification of $\mathbb{F}_q(\mathcal{H}_-\,,-2)$.
More precisely, there exists $r\ge 1$ such that
the series $\sum_{n=0}^\infty a_n F^{\otimes n}$
converges in $\mathbb{F}_q(\mathcal{H}_-\,,r^{-1},-2)_{\mathbb C}$\,.
\end{theorem}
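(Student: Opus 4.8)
The plan is to reduce the convergence of $\sum_{n=0}^\infty a_n F^{\otimes n}$ to a geometric-series estimate by exploiting the special structure of inequality \eqref{cfxtdr6ue7i}, in which the norm of $F\otimes G$ and the norm of $G$ are both taken in the \emph{same} space $\mathbb{F}_q(\mathcal{H}_-,r^{-1},-2)$. First I would write $F=f^{(0)}\Omega+\widetilde F$, where $\widetilde F:=F-f^{(0)}\Omega\in\mathbb{F}_q(\mathcal{H}_-,-2)$ has vanishing zeroth component, and observe that $F^{\otimes n}$ can be expanded by the (noncommutative, but here associative and with $\Omega$ as unit) tensor binomial formula into $\sum_{k=0}^n\binom{n}{k}(f^{(0)})^{n-k}\widetilde F^{\otimes k}$; substituting this into $\phi^\otimes(F)=\sum_n a_n F^{\otimes n}$ and interchanging the order of summation (justified a posteriori by absolute convergence) gives $\phi^\otimes(F)=\sum_{k=0}^\infty b_k\,\widetilde F^{\otimes k}$ with $b_k=\frac{1}{k!}\phi^{(k)}(f^{(0)})$, a power series in $\widetilde F$ with the \emph{same} radius-of-convergence behaviour: since $|f^{(0)}|<R$, the series $\sum_k b_k z^k$ still has a positive radius of convergence, say $\rho>0$, so $|b_k|\le C\rho_1^{-k}$ for any $\rho_1<\rho$.

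Next I fix $s\ge1$ such that $F\in\mathbb{F}_q(\mathcal{H}_-,s^{-1},-2)$, hence also $\widetilde F\in\mathbb{F}_q(\mathcal{H}_-,s^{-1},-2)$, and I choose $r>s$. Applying \eqref{cfxtdr6ue7i} iteratively, with $G$ always an already-formed power $\widetilde F^{\otimes(k-1)}$ whose norm sits in the space indexed by $r^{-1}$ and with the single factor $\widetilde F$ contributing its norm in the space indexed by $s^{-1}$, I get
\begin{equation}\label{iter-est}
\|\widetilde F^{\otimes k}\|_{\mathbb{F}_q(\mathcal{H}_-,r^{-1},-2)}
\le\bigg(\frac{r}{r-s}\bigg)^{(k-1)/2}
\|F\|_{\mathbb{F}_q(\mathcal{H}_-,s^{-1},-2)}^{k-1}\,
\|\widetilde F\|_{\mathbb{F}_q(\mathcal{H}_-,r^{-1},-2)}.
\end{equation}
The key point is that $\|F\|_{\mathbb{F}_q(\mathcal{H}_-,s^{-1},-2)}$ is a fixed constant, and the only $r$-dependence is through the harmless prefactor $(r/(r-s))^{(k-1)/2}$, which grows only geometrically in $k$ with a base $r/(r-s)$ that tends to $1$ as $r\to\infty$. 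Combining \eqref{iter-est} with the bound $|b_k|\le C\rho_1^{-k}$, the series $\sum_k b_k\widetilde F^{\otimes k}$ is dominated in $\mathbb{F}_q(\mathcal{H}_-,r^{-1},-2)_{\mathbb C}$ by a geometric series with ratio $\rho_1^{-1}\,\|F\|_{\mathbb{F}_q(\mathcal{H}_-,s^{-1},-2)}\,(r/(r-s))^{1/2}$; since $\|F\|_{\mathbb{F}_q(\mathcal{H}_-,s^{-1},-2)}$ is fixed and $(r/(r-s))^{1/2}\to1$, one can pick $r$ large enough that this ratio is $<1$, giving absolute convergence in $\mathbb{F}_q(\mathcal{H}_-,r^{-1},-2)_{\mathbb C}$, hence in $\mathbb{F}_q(\mathcal{H}_-,-2)_{\mathbb C}$.

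The main obstacle I anticipate is not any single estimate but the bookkeeping that makes the iteration \eqref{iter-est} legitimate: one must check that the noncommutative tensor product on $\mathbb{F}_q(\mathcal{H}_-,-2)$ is associative with unit $\Omega$ (so that $\widetilde F^{\otimes k}$ is unambiguous and the binomial expansion of $F^{\otimes n}$ is valid), and one must apply \eqref{cfxtdr6ue7i} in the correct order — always pairing the ``small'' factor $\widetilde F$ (norm in the $s$-space) against the ``large'' accumulated factor (norm in the $r$-space) — since the inequality is not symmetric in its two arguments. A subsidiary point is the interchange of the double sum over $n$ and $k$, which is justified by first running the whole argument with $\phi$ replaced by $\sum|a_n|z^n$ and $F$ by a vector with all nonnegative components, or simply by noting that the rearranged series $\sum_k b_k\widetilde F^{\otimes k}$ and the original $\sum_n a_n F^{\otimes n}$ have the same partial-sum structure once absolute convergence of the rearranged series is established. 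Everything else is the geometric-series estimate above, which goes through precisely because \eqref{cfxtdr6ue7i} has the self-improving form emphasized in the remark following Theorem~\ref{rte6u4}.
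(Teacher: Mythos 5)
Your overall strategy---iterate the V\r{a}ge-type inequality \eqref{cfxtdr6ue7i} to bound $\|\cdot^{\otimes k}\|$ geometrically and compare with a convergent power series---is the right one, and the recentering $F=f^{(0)}\Omega+\widetilde F$ is harmless (the vacuum component is central, so the binomial expansion is valid). But the final convergence step contains a genuine gap. Your dominating ratio is $\rho_1^{-1}\,\|F\|_{\mathbb{F}_q(\mathcal{H}_-,s^{-1},-2)}\,(r/(r-s))^{1/2}$ with $s$ \emph{fixed} at the outset as any index for which $F$ lies in $\mathbb{F}_q(\mathcal{H}_-,s^{-1},-2)$, and you claim it can be made $<1$ by taking $r$ large. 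As $r\to\infty$ this ratio tends to $\rho_1^{-1}\|F\|_{\mathbb{F}_q(\mathcal{H}_-,s^{-1},-2)}$, which need not be $<1$: $\rho_1$ is bounded by the radius of convergence of the recentered series, which may equal $R-|f^{(0)}|$ and hence be arbitrarily small, while $\|F\|_{\mathbb{F}_q(\mathcal{H}_-,s^{-1},-2)}\ge|f^{(0)}|$ for \emph{every} $s$. (Using $\|F\|_s^{k-1}$ rather than $\|\widetilde F\|_s^{k-1}$ in your iterated estimate makes this unavoidable; even with $\|\widetilde F\|_s$, for a fixed $s$ that quantity is whatever it is and can exceed $\rho_1$.) Increasing $r$ only tames the combinatorial prefactor $(r/(r-s))^{1/2}$; it does nothing to shrink the norm of the repeated factor. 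The same quantitative smallness is also needed to justify your interchange of the sums over $n$ and $k$, so that step is not merely bookkeeping.

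The missing idea---and the mechanism the paper actually uses---is that the smallness must come from sending $s\to\infty$ \emph{first}. Since $\|F\|_{\mathbb{F}_q(\mathcal{H}_-,s^{-1},-2)}^2=\sum_n\|f^{(n)}\|^2_{\mathbb{F}_q^{(n)}(\mathcal{H}_-)}s^{-n}([n]_{|q|}!)^{-2}\to|f^{(0)}|^2$ as $s\to\infty$ (dominated convergence), the hypothesis $|f^{(0)}|<R$ lets one choose $s$ so large that $\|F\|_{\mathbb{F}_q(\mathcal{H}_-,s^{-1},-2)}=R-\varepsilon<R$; then iterating \eqref{cfxtdr6ue7i} directly on $F^{\otimes n}=F\otimes F^{\otimes(n-1)}$ gives $\|F^{\otimes n}\|_{\mathbb{F}_q(\mathcal{H}_-,r^{-1},-2)}\le(r/(r-s))^{(n-1)/2}(R-\varepsilon)^n$, and finally $r$ is chosen so large that $(r/(r-s))^{1/2}(R-\varepsilon)<R$, after which $\sum_n|a_n|\,\|F^{\otimes n}\|_{\mathbb{F}_q(\mathcal{H}_-,r^{-1},-2)}$ converges because $\sum|a_n|z^n$ converges absolutely on $|z|<R$. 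No recentering is needed. If you want to keep your decomposition, the repair is the same in spirit: fix $\rho_1$ below the radius of the recentered series, use $\|\widetilde F\|_s^{k-1}$ in the iteration, and choose $s$ so large that $\|\widetilde F\|_{\mathbb{F}_q(\mathcal{H}_-,s^{-1},-2)}<\rho_1$ (possible since this norm tends to $0$ as $s\to\infty$), and only then send $r\to\infty$.
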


\begin{proof}
Choose $s\ge1$ so that $F\in \mathbb{F}_q(\mathcal{H}_-\,,s^{-1},-2)$.
Since  $|f^{(0)}|<R$, by choosing $s$ sufficiently large,
we may assume that $\|F\|_{\mathbb{F}_q(\mathcal{H}_-\,,s^{-1},-2)}<R$.
Using \eqref{cfxtdr6ue7i}, we can easily show by induction that, for each $r>s$ and $n\ge2$,
\[
\|F^{\otimes n}\|_{\mathbb{F}_q(\mathcal{H}_-\,,r^{-1},-2)}
\le\bigg(\frac{r}{r-s}\bigg)^{(n-1)/2}\|F\|^{n}_{_{\mathbb{F}_q(\mathcal{H}_-\,,s^{-1},-2)}}.
\]
Let $\varepsilon:=R-\|F\|_{\mathbb{F}_q(\mathcal{H}_-\,,s^{-1},-2)}>0$.
Then,
\[
\|F^{\otimes n}\|_{\mathbb{F}_q(\mathcal{H}_-\,,r^{-1},-2)}
\le \bigg(\frac{r}{r-s}\bigg)^{(n-1)/2} (R-\varepsilon)^n.
\]
Hence, the statement follows if we choose $r$ so large that
\[
\bigg(\frac{r}{r-s}\bigg)^{1/2}(R-\varepsilon)<R.
\]
\end{proof}

\begin{corollary}\label{tye5674}
Let $\phi(z)$ be an analytic function of a complex variable $z$ defined in a neighborhood of zero.
Then, for each $F\in\mathbb{F}_q(\mathcal{H}_-\,,-2)$,
we have
$\phi^{\otimes}(zF)\in\mathbb{F}_q(\mathcal{H}_-\,,-2)_{\mathbb C}$
for all complex $z$ from a neighborhood of zero.
\end{corollary}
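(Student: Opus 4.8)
The plan is to deduce this immediately from Theorem \ref{vuf7t} by a rescaling argument. Fix an arbitrary $F\in\mathbb{F}_q(\mathcal{H}_-,-2)$ and let $\phi(z)=\sum_{n=0}^\infty a_nz^n$ be analytic in a neighborhood of zero; say the series converges absolutely in the open disk of radius $R>0$. First I would choose $s\ge1$ such that $F\in\mathbb{F}_q(\mathcal{H}_-,s^{-1},-2)$ and set $M:=\|F\|_{\mathbb{F}_q(\mathcal{H}_-,s^{-1},-2)}$; note also that $|f^{(0)}|\le M$ since the $n=0$ component contributes $|f^{(0)}|^2$ to the squared norm. The key observation is that for $z\in\mathbb C$ with $|z|$ small enough that $|z|\,M<R$, the element $zF$ (computed in the complexification $\mathbb{F}_q(\mathcal{H}_-,-2)_{\mathbb C}$) satisfies $\|zF\|_{\mathbb{F}_q(\mathcal{H}_-,s^{-1},-2)_{\mathbb C}}=|z|\,M<R$ and its zeroth component $zf^{(0)}$ has modulus $|z|\,|f^{(0)}|<R$.

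Next I would apply Theorem \ref{vuf7t} to the element $zF$ in place of $F$. Strictly speaking, Theorem \ref{vuf7t} is stated for real elements of $\mathbb{F}_q(\mathcal{H}_-,-2)$, but its proof only uses the V\r{a}ge-type inequality \eqref{cfxtdr6ue7i}, which extends verbatim to the complexification (the tensor product is bilinear and the norms are Hilbert-space norms, so the same estimate $\|(zF)^{\otimes n}\|_{\mathbb{F}_q(\mathcal{H}_-,r^{-1},-2)_{\mathbb C}}\le(r/(r-s))^{(n-1)/2}\|zF\|^n_{\mathbb{F}_q(\mathcal{H}_-,s^{-1},-2)_{\mathbb C}}$ holds for $r>s$ by the identical induction). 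Hence, writing $\varepsilon:=R-|z|M>0$, the same computation as in the proof of Theorem \ref{vuf7t} shows that choosing $r>s$ large enough that $(r/(r-s))^{1/2}(R-\varepsilon)<R$ makes the series $\sum_{n=0}^\infty a_n (zF)^{\otimes n}=\phi^{\otimes}(zF)$ converge in $\mathbb{F}_q(\mathcal{H}_-,r^{-1},-2)_{\mathbb C}$, hence in $\mathbb{F}_q(\mathcal{H}_-,-2)_{\mathbb C}$.

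Finally I would record that this conclusion holds for all $z$ in the disk $\{|z|<R/M\}$ (or $\{|z|<R/M\}$ intersected with wherever $\phi$ is defined, but the radius-$R$ absolute convergence is exactly what was assumed), which is a genuine neighborhood of zero since $M<\infty$; if $M=0$ then $F=0$ and the statement is trivial. The only mild subtlety — and the one point worth spelling out rather than leaving implicit — is the passage to the complexification, i.e.\ checking that \eqref{cfxtdr6ue7i} and the inductive estimate in the proof of Theorem \ref{vuf7t} survive complex scalars; this is routine because $\mathbb{F}_q(\mathcal{H}_-,r^{-1},-2)_{\mathbb C}$ is a Hilbert space with the tensor product still bilinear and the binomial-coefficient bound of Lemma \ref{jkigiy8tjbg} insensitive to the field. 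No real obstacle arises; the corollary is essentially a change of variables in Theorem \ref{vuf7t}.
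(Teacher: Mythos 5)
Your proof is correct and is essentially the argument the paper intends: the corollary is stated without proof as an immediate consequence of Theorem~\ref{vuf7t} applied to $zF$, which is exactly what you do, and your explicit remarks on the complexification and on the trivial case $F=0$ only make explicit what the paper leaves implicit. (The only cosmetic difference is that you shrink the neighborhood to $|z|<R/\|F\|_{\mathbb{F}_q(\mathcal{H}_-,s^{-1},-2)}$ rather than invoking the weaker hypothesis $|zf^{(0)}|<R$ of Theorem~\ref{vuf7t} directly; since only \emph{some} neighborhood of zero is claimed, this is harmless.)
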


Analogously to \cite[Proposition~4.10]{AS}, we also obtain

\begin{corollary}\label{hufrku7r87o}
Let $F=(f^{(n)})_{n=0}^\infty\in\mathbb{F}_q(\mathcal{H}_-\,,-2)$.
Then $F$ has an inverse element $F^{\otimes(-1)}\in\mathbb{F}_q(\mathcal{H}_-\,,-2)$
with respect to tensor multiplication if and only if $f^{(0)}\ne0$.
In the latter case,
\[
F^{\otimes(-1)}=(f^{(0)})^{-1}\sum_{n=0}^\infty\big(\Omega-(f^{(0)})^{-1}F\big)^{\otimes n}.
\]
\end{corollary}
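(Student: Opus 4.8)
The plan is to run the usual Neumann-series argument, drawing on Theorem~\ref{vuf7t} for the convergence of the geometric series and on inequality~\eqref{cfxtdr6ue7i} to pass to the limit; this is the noncommutative $q$-analogue of \cite[Proposition~4.10]{AS}. For the necessity of $f^{(0)}\ne0$, I would observe that the projection onto the zeroth component, $\pi_0\colon\mathbb{F}_q(\mathcal{H}_-,-2)\to\R$, $(g^{(n)})_{n=0}^\infty\mapsto g^{(0)}$, is multiplicative for the tensor product, since $(F\otimes G)^{(0)}=f^{(0)}g^{(0)}$ (the $n=0$ summand of $\sum_{i=0}^n f^{(i)}\otimes g^{(n-i)}$, with scalars multiplied), and $\pi_0(\Omega)=1$. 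Hence if $F\otimes G=\Omega$ for some $G=(g^{(n)})_{n=0}^\infty\in\mathbb{F}_q(\mathcal{H}_-,-2)$, then $f^{(0)}g^{(0)}=1$, so $f^{(0)}\ne0$.

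Conversely, assume $f^{(0)}\ne0$ and put $H:=\Omega-(f^{(0)})^{-1}F$, so that $F=f^{(0)}(\Omega-H)$ and the zeroth component of $H$ equals $1-(f^{(0)})^{-1}f^{(0)}=0$. Applying Theorem~\ref{vuf7t} with $\phi(z)=(1-z)^{-1}=\sum_{n=0}^\infty z^n$, which is analytic near $0$ with radius of absolute convergence $R=1$, and using that the zeroth component of $H$ has modulus $0<R$, I obtain that $\sum_{n=0}^\infty H^{\otimes n}$ converges, in fact in $\mathbb{F}_q(\mathcal{H}_-,r_0^{-1},-2)$ for some $r_0\ge1$; the sum stays in the real space $\mathbb{F}_q(\mathcal{H}_-,-2)$ because $H$ is real. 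I then set $F^{\otimes(-1)}:=(f^{(0)})^{-1}\sum_{n=0}^\infty H^{\otimes n}$, which is the element displayed in the statement.

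To see that this is a two-sided inverse of $F$, I would use the telescoping identity $(\Omega-H)\otimes\sum_{n=0}^N H^{\otimes n}=\Omega-H^{\otimes(N+1)}=\bigl(\sum_{n=0}^N H^{\otimes n}\bigr)\otimes(\Omega-H)$, which in particular shows that $\Omega-H$ commutes with each partial sum and hence with the limit. Enlarging $r_0$ if necessary so that $F$, and therefore $\Omega-H$, lies in $\mathbb{F}_q(\mathcal{H}_-,r_0^{-1},-2)$, inequality~\eqref{cfxtdr6ue7i} with a parameter $r>r_0$ shows that $W\mapsto(\Omega-H)\otimes W$ is continuous from $\mathbb{F}_q(\mathcal{H}_-,r_0^{-1},-2)$ into $\mathbb{F}_q(\mathcal{H}_-,r^{-1},-2)$. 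Since the tail of $\sum_{n=0}^\infty H^{\otimes n}$ and the term $H^{\otimes(N+1)}$ both tend to $0$ in $\mathbb{F}_q(\mathcal{H}_-,r_0^{-1},-2)$, hence also in the weaker norm of $\mathbb{F}_q(\mathcal{H}_-,r^{-1},-2)$, letting $N\to\infty$ gives $(\Omega-H)\otimes\sum_{n=0}^\infty H^{\otimes n}=\Omega$ in $\mathbb{F}_q(\mathcal{H}_-,r^{-1},-2)$, and hence in $\mathbb{F}_q(\mathcal{H}_-,-2)$; the same holds with the factors in the opposite order. Multiplying through by $(f^{(0)})^{-1}$ yields $F\otimes F^{\otimes(-1)}=F^{\otimes(-1)}\otimes F=\Omega$, and uniqueness of a two-sided inverse identifies it with the stated formula.

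I expect the only genuinely delicate point to be this last interchange of tensor multiplication by $\Omega-H$ with the infinite sum: one must keep track of which space of the inductive family the convergence takes place in and invoke the continuity furnished by~\eqref{cfxtdr6ue7i}, noting that enlarging the parameter $r$ only weakens the norm, so that the tails and the powers $H^{\otimes(N+1)}$ still vanish in the larger space. Everything else is the routine Neumann-series bookkeeping.
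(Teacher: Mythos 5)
Your proof is correct and is exactly the argument the paper intends: the corollary is stated without proof, citing only the analogy with \cite[Proposition~4.10]{AS}, and that analogy is precisely your Neumann-series construction via Theorem~\ref{vuf7t} applied to $\phi(z)=(1-z)^{-1}$ together with the continuity of tensor multiplication supplied by inequality~\eqref{cfxtdr6ue7i}. Your care about which space $\mathbb{F}_q(\mathcal{H}_-,r^{-1},-2)$ of the inductive family hosts each limit, and the multiplicativity of the zeroth-component projection for the necessity of $f^{(0)}\ne0$, are both sound.
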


Finally, let us briefly discuss the case of a nuclear space.
Let $\Phi$ be a nuclear space as in \eqref{crte6u}.
Assume that $\Phi$ is densely and continuously embedded into $\mathcal{H}$.
Without loss of generality,
we may assume that each Hilbert space $\mathcal{H}_\tau$ ($\tau \in T$)
is a dense subset of $\mathcal{H}$
and $\|\cdot\|_\mathcal{H}\le\|\cdot\|_{\mathcal{H}_\tau}$.
Denote by $\mathcal{H}_{-\tau}$ the dual space of $\mathcal{H}_\tau$
with respect to the center space $\mathcal{H}$.
We then get a Gel'fand triple
\[
\Phi\subset \mathcal{H}\subset\Phi',
\]
where the dual space $\Phi'$ has representation
\[
\Phi'=\operatornamewithlimits{ind\,lim}_{\tau\in T}\mathcal{H}_{-\tau},
\]
see e.g.\ \cite{BK}.

Let us consider the  nuclear space $\mathcal{G}_{|q|}(\Phi,2)$, see \eqref{gyd66de}.
We also define
\[
\mathbb{F}_q(\Phi',-2)
:=\operatornamewithlimits{lim\,ind}_{(\tau,r)\in T\times[1,\infty)}\mathbb{F}_q(\mathcal{H}_{-\tau},r^{-1},-2).
\]
By Proposition~\ref{tyri745r},
we then get the Gel'fand triple
\[
\mathcal{G}_{|q|}(\Phi,2)\subset\mathcal{F}_q(\mathcal{H})\subset\mathbb{F}_q(\Phi',-2),
\]
where the co-nuclear space $\mathbb{F}_q(\Phi',-2)$
is the dual of $\mathcal{G}_{|q|}(\Phi,2)$
with respect to the center space $\mathcal{F}_q(\mathcal{H})$.

\begin{corollary}\label{guf7t}
The  co-nuclear space $\mathbb{F}_q(\Phi',-2)$
is an algebra under addition and tensor multiplication.
\end{corollary}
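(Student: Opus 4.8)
The plan is to deduce the statement from the single--Hilbert--space result of Theorem~\ref{rte6u4}, using the representation of $\mathbb{F}_q(\Phi',-2)$ as an inductive limit together with the directedness of the family $(\mathcal{H}_\tau)_{\tau\in T}$. All the algebra axioms apart from closedness under $\otimes$ are automatic: on each defining Hilbert space $\mathbb{F}_q(\mathcal{H}_{-\tau},r^{-1},-2)$ the tensor multiplication, $(F\otimes G)^{(n)}=\sum_{i=0}^{n}f^{(i)}\otimes g^{(n-i)}$, is associative and distributive, and the vector $\Omega=(1,0,0,\dots)$, which lies in every $\mathbb{F}_q(\mathcal{H}_{-\tau},r^{-1},-2)$, is a neutral element. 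So the only thing to prove is that $F\otimes G\in\mathbb{F}_q(\Phi',-2)$ whenever $F,G\in\mathbb{F}_q(\Phi',-2)$.

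First I would fix $F,G\in\mathbb{F}_q(\Phi',-2)$ and, by the inductive limit representation, choose $\tau_1,\tau_2\in T$ and $s_1,s_2\ge1$ with $F\in\mathbb{F}_q(\mathcal{H}_{-\tau_1},s_1^{-1},-2)$ and $G\in\mathbb{F}_q(\mathcal{H}_{-\tau_2},s_2^{-1},-2)$. By the directedness of $T$---part of the definition of the nuclear space $\Phi$---there is $\tau_3\in T$ such that $\mathcal{H}_{\tau_3}$ is densely and continuously embedded into both $\mathcal{H}_{\tau_1}$ and $\mathcal{H}_{\tau_2}$; dualizing with respect to the center $\mathcal{H}$ yields continuous embeddings $\mathcal{H}_{-\tau_1},\mathcal{H}_{-\tau_2}\hookrightarrow\mathcal{H}_{-\tau_3}$. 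Passing to tensor powers (the operators $P_q^{(n)}$ are given by the same permutation formula on every tensor power, so they do not interfere) and absorbing the finitely many embedding constants into the weight parameter---which is harmless, since increasing $r$ only enlarges $\mathbb{F}_q(\mathcal{H}_{-\tau_3},r^{-1},-2)$, the weights $r^{-n}$ being non-increasing in $r$---I obtain an $s\ge1$ with $F,G\in\mathbb{F}_q(\mathcal{H}_{-\tau_3},s^{-1},-2)$.

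By the standing assumption on the family $(\mathcal{H}_\tau)_{\tau\in T}$, the space $\mathcal{H}_{\tau_3}$ is a Hilbert space of the type considered in Lemma~\ref{yderi576}, so Proposition~\ref{tyri745r} and Theorem~\ref{rte6u4} apply to the standard triple $\mathcal{H}_{\tau_3}\subset\mathcal{H}\subset\mathcal{H}_{-\tau_3}$; in particular, inequality~\eqref{cfxtdr6ue7i} holds with $\mathcal{H}_-$ replaced by $\mathcal{H}_{-\tau_3}$. Taking any $r>s$, it gives $F\otimes G\in\mathbb{F}_q(\mathcal{H}_{-\tau_3},r^{-1},-2)\subset\mathbb{F}_q(\Phi',-2)$, which is exactly the closedness we need; the accompanying norm bound moreover shows that $\otimes$ maps $\mathbb{F}_q(\mathcal{H}_{-\tau_3},s^{-1},-2)^2$ continuously into $\mathbb{F}_q(\Phi',-2)$. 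The one step requiring care is the second---the bookkeeping with the directed family so that a single pair $(\tau_3,s)$ accommodates both $F$ and $G$; the analytic content of the corollary is entirely carried by Theorem~\ref{rte6u4}, and no new estimate is needed.
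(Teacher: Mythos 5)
Your proof is correct and follows the paper's route: the paper simply declares the corollary ``immediate from Theorem~\ref{rte6u4}'', and you have filled in exactly the routine bookkeeping that makes it immediate --- using directedness of $(\mathcal{H}_\tau)_{\tau\in T}$ to place $F$ and $G$ in a single space $\mathbb{F}_q(\mathcal{H}_{-\tau_3},s^{-1},-2)$ and then invoking inequality~\eqref{cfxtdr6ue7i} there.
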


\begin{proof}
Immediate from Theorem~\ref{rte6u4}.
\end{proof}

Clearly, the results of Theorem~\ref{vuf7t} and Corollaries~\ref{tye5674}, \ref{hufrku7r87o}
admit straightforward generalization to the case of $\mathbb{F}_q(\Phi',-2)$.

\begin{center}
{\bf Acknowledgements}\end{center}
 
We would like to thank the anonymous referee for their careful reading of our manuscript and bringing the paper \cite{AS2013} to our attention.

\end{document}